\newtheorem{theorem}{Theorem}[section]
\newtheorem{lemma}[theorem]{Lemma}
\newtheorem{cor}[theorem]{Corollary}
\newtheorem{prop}[theorem]{Proposition}
\theoremstyle{definition}
\newtheorem{example}[theorem]{Example}
\DeclareMathOperator{\dimH}{dim_H}
\begin{document}
\title{On the Hausdorff Dimension of Bernoulli Convolutions}
\author{Shigeki Akiyama, De-Jun Feng,\\ Tom Kempton and Tomas Persson}
\maketitle

\begin{abstract}
  \noindent We give an expression for the Garsia entropy of Bernoulli
  convolutions in terms of products of matrices. This gives an
  explicit rate of convergence of the Garsia entropy and shows that
  one can calculate the Hausdorff dimension of the Bernoulli
  convolution $\nu_{\beta}$ to arbitrary given accuracy whenever
  $\beta$ is algebraic. In particular, if the Garsia entropy
  $H(\beta)$ is not equal to $\log(\beta)$ then we have a finite time
  algorithm to determine whether or not $\dimH (\nu_{\beta})=1$.
\end{abstract}

\section{Introduction}

Bernoulli convolutions are a simple and interesting family of
self-similar measures with overlaps. For $\beta\in(1,2)$, the
Bernoulli convolution $\nu_{\beta}$ is defined as the weak-star limit
of the family of measures $\nu_{\beta}^{(n)}$ given by
\[
\nu_{\beta}^{(n)}:=\frac{1}{2^n}\sum_{a_1\cdots a_n\in\{0,1\}^n}
\delta_{\sum_{i=1}^n a_i\beta^{-i}}.
\]

The fundamental questions relating to the Bernoulli convolution
$\nu_{\beta}$ are whe\-th\-er it has Hausdorff dimension one, and if
so, whether it is absolutely continuous.

Erd\H{o}s proved that $\nu_{\beta}$ is singular whenever $\beta$ is a
Pisot number \cite{ErdosPisot}, and it was later proved by Garsia that
in fact $\nu_{\beta}$ has Hausdorff dimension less than one whenever
$\beta$ is Pisot \cite{GarsiaSingular}. So far, Pisot numbers are the
only class of $\beta$ for which it is known that $\nu_{\beta}$ is
singular. Garsia gave a small explicit class of $\beta$ for which
$\nu_{\beta}$ is absolutely continuous \cite{GarsiaAC}, until recently
these were the only examples of Bernoulli convolutions for which it
was known that the Hausdorff dimension is one. In \cite{SolomyakAC}
Solomyak proved that $\nu_{\beta}$ is absolutely continuous for
Lebesgue-almost every $\beta\in(1,2)$.

A great deal of the recent progress on Bernoulli convolutions stems
from Hochman's article \cite{HochmanInverse}, where it was proved that
if $\nu_{\beta}$ has Hausdorff dimension less than one then the sums
in the definition of $\nu_{\beta}^{(n)}$ must be superexponentially
close. This can only happen on a set of $\beta$ of Hausdorff dimension
zero. Additionally, Hochman proved that if $\beta$ is algebraic then
$\dimH(\nu_{\beta})$ can be expressed in terms of the Garsia entropy
of $\beta$, which will be defined in Section~\ref{Sec1.2}.

Further recent progress was made by Breuillard and Varju
\cite{VarjuBreuillard1}, where it was proved that
\[
H(\beta)\geq 0.44 \min \{\log 2, \log M_{\beta}\},
\]
for any algebraic integer $\beta\in (1,2)$, where $H(\beta)$ is the
Garsia entropy of $\nu_\beta$ (see Section~\ref{Sec1.2} for the
definition) and $M_\beta$ is the Mahler measure of $\beta$ defined by
$M_\beta=\prod_{|\beta_i|>1}|\beta_i|$, where $\beta_i$ are the
algebraic conjugates (including $\beta$ itself) of $\beta$. This
implies that for an algebraic integer $\beta\in (1,2)$,
$\dimH(\nu_{\beta})=1$ if $0.44 \min \{\log 2, \log M_{\beta}\}\geq
\log \beta$ (see \eqref{e-hochman}).

In \cite{VarjuBreuillard2}, Breuillard and Varju showed, among other
results, that
\[
\{\beta\in(1,2) : \dimH(\nu_{\beta}) < 1 \} \subset \overline{ \{
  \beta \in (1,2) \cap \overline{\mathbb Q}:\dimH(\nu_{\beta})<1\}},
\]
where $\overline{\mathbb Q}$ is the set of algebraic numbers. This,
together with Hochman's results, has sparked renewed interest in the
study of Garsia entropy for algebraic parameters. If one were able to
show that Pisot numbers are the only algebraic numbers corresponding
to Bernoulli convolutions of dimension less than one, this would show
that all non-Pisot $\beta$ give rise to Bernoulli convolutions of
dimension $1$ (without the restriction that $\beta$ should be
algebraic).

There have also been recent results on the absolute continuity of
Bernoulli convolutions. Shmerkin \cite{ShmerkinAC} proved further that
$\nu_{\beta}$ is absolutely continuous for all $\beta\in(1,2)\setminus
\mathcal E$ where $\mathcal E$ is a set of exceptions of Hausdorff
dimension zero. In \cite{VarjuAC}, Varju gave new explicit examples of
absolutely continuous Bernoulli convolutions. For a recent summary of
progress on Bernoulli convolutions, see \cite{VarjuSummary}.

In this article we are interested in expressing the Garsia entropy and
the dimension of Bernoulli convolutions $\nu_{\beta}$ in terms of
products of matrices. There is some precedent for this, see in
particular \cite{FengSidorov}, but these previous ideas are based on
tilings of the unit interval related to the construction of
$\nu_{\beta}$, and cannot be generalised to the non-Pisot cases. We
use a different approach to show that, for any algebraic integer
$\beta$, one can construct matrices whose products encode information
about the Garsia entropy. In particular, we give a sequence of lower
bounds for the Garsia entropy which yield an explicit rate of
convergence in the Garsia entropy formula.

\subsection{Statement of Results}\label{Sec1.2}

Let $\Sigma:=\{0,1\}^{\mathbb N}$. For $p\in(0,1)$, let $m_p$ denote
the $(p,1-p)$ Bernoulli product measure on $\Sigma$ which gives weight
$p$ to digit $0$ and weight $1-p$ to digit $1$. For $\beta \in (1,2)$,
the transformation $\pi_\beta \colon \Sigma \to \mathbb{R}$ defined by
\[
\pi_\beta \colon (a_i)_{i=1}^\infty \mapsto \sum_{i=1}^\infty a_i
\beta^{-i},
\]
maps the measure $m_p$ to a measure $\nu_{\beta,p}$ on
$\mathbb{R}$. That is, $\nu_{\beta,p}=m_{\beta}\circ
\pi^{-1}_\beta$. For $p=\frac{1}{2}$, we get the Bernoulli convolution
$ \nu_\beta=\nu_{\beta,\frac{1}{2}} $, which was defined in the
previous section. For $p \neq \frac{1}{2}$ we get a so-called biased
Bernoulli convolution.

Given a word $a_1\cdots a_n\in\{0,1\}^n$, let the cylinder set
$[a_1\cdots a_n]$ be defined by
\[
[a_1\cdots a_n] = \{\underline b=(b_i)_{i=1}^{\infty} \in \Sigma :
b_1\cdots b_n = a_1\cdots a_n\}.
\]

Given a sequence $\underline a=(a_i)_{i=1}^{\infty}\in\{0,1\}^{\mathbb
  N}$, let
\[
\mathcal N_n(\underline a)=\mathcal N_n(a_1,\ldots,
a_n)=\left| \left\{(b_1,\ldots,b_n)\in\{0,1\}^n: \sum_{i=1}^n
b_i\beta^{-i}=\sum_{i=1}^n a_i\beta^{-i}\right\}\right|
\]
and
\[
\mathcal M_n(\underline a,p) = \sum_{\substack{b_1 \cdots b_n \in
    \{0,1\}^n \\ \sum_{i = 1}^n b_i \beta^{-i} = \sum_{i=1}^n a_i
    \beta^{-i}}} m_p [b_1 \cdots b_n].
\]
In what follows we write $\mathcal M_n(\underline a)$ or $\mathcal
M_n(a_1\cdots a_n)$, de-emphacising the dependence on $p$ since we
consider $p$ to be fixed.

Let\footnote{ It would be more standard to write \[
  H_n(\beta)=\sum_{x}\mathcal M_n(x)\log \mathcal M_n(x),
\]
where the sum is over all $x$ having a representation $x=\sum_{i=1}^n
a_i\beta^{-i}$ and $\mathcal M_n(x)$ is just $\mathcal M_n(\underline
a)$ for any $\underline a$ with $x=\sum_{i=1}^n a_i\beta^{-i}$. These
expressions are clearly equivalent, we find ours more convenient since
we work only with sequences and since the above makes the link with
Lyapunov exponents of pairs of matrices more direct.}
\[
H_n(\beta,p):= -\sum_{a_1\cdots a_n\in\{0,1\}^n} m_p[a_1\cdots
  a_n]\log \mathcal M_n(a_1 \cdots a_n).
\]
Finally we let
\[
H(\beta,p):=\lim_{n\to\infty}\frac{1}{n} H_n(\beta,p).
\]
$H(\beta,p)$ is called the Garsia entropy \footnote{Beware, there are
  two different conventions for the definition of Garsia entropy. Some
  authors divide by $\log(\beta)$ in the definition.}  of
$\nu_{\beta,p}$.  In particular, we write $H_n(\beta)=H_n(\beta,1/2)$
and $H(\beta)=H(\beta,1/2)$.

Hochman \cite{HochmanInverse} proved that if $\beta\in(1,2)$ is
algebraic then the dimension of the Bernoulli convolution
$\nu_{\beta,p}$ is given by
\begin{equation}
  \label{e-hochman}
  \dimH(\nu_{\beta,p}) = \min \biggl\{\dfrac{H(\beta,p)}{\log
    \beta},1\biggr\};
\end{equation}
see also \cite{VarjuBreuillard1} for a more detailed explanation.

In this article we are concerned with lower bounds for $H(\beta,p)$,
and hence lower bounds for $\dimH(\nu_{\beta,p})$, when $\beta$ is
algebraic. If $\beta$ is not an algebraic integer, i.e.\ not the root
of a polynomial with integer coefficients where the leading
coefficient is $1$, then $H(\beta,p)=\log 2$. Thus we may restrict our
interest to algebraic integers.

Given an algebraic integer $\beta=\beta^{(1)}$ of degree $d$, let
$\beta^{(2)},\ldots,\beta^{(d)}$ denote its Galois conjugates, ordered
by decreasing absolute value.

\begin{theorem}\label{thm1}
  Let $\beta$ be an algebraic integer of degree $d$ and let
  $p\in(0,1)$. The Garsia entropy $H(\beta,p)$ can be approximated
  with explicit error bounds. In particular,
  \[
  \frac{1}{n}H_n(\beta,p)-\frac{C+l\log (n+1)}{n}\leq H(\beta,p)\leq
  \frac{1}{n}H_n(\beta,p)
  \]
  for all $n\in {\mathbb N}$, where
  \[
  C=\log\left(2^d \prod_{i:|\beta^{(i)}|\neq 1}
  \frac{1}{||\beta^{(i)}|-1|} + 1 \right),
  \]
and $l$ is the number of conjugates of $\beta$ of absolute value $1$.
\end{theorem}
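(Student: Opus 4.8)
The plan is to obtain the two inequalities from the \emph{subadditivity} and \emph{near\-superadditivity} of the sequence $H_n(\beta,p)$, after reinterpreting it as the Shannon entropy of a partition. Write $\mathcal P_n$ for the partition of $\Sigma$ into the level sets of the map $\underline a\mapsto\sum_{i=1}^n a_i\beta^{-i}$; then $H_n(\beta,p)=H_{m_p}(\mathcal P_n)$, as the footnote to the definition indicates. Since $\sum_{i=1}^{n+m}a_i\beta^{-i}=\sum_{i=1}^n a_i\beta^{-i}+\beta^{-n}\sum_{j=1}^m a_{n+j}\beta^{-j}$, the partition $\mathcal P_{n+m}$ is coarser than $\mathcal P_n\vee\sigma^{-n}\mathcal P_m$. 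As $m_p$ is a product measure and $\sigma^{-n}\mathcal P_m$ depends only on coordinates $n+1,\dots,n+m$, these two factors are independent, so $H_{n+m}\le H_{m_p}(\mathcal P_n\vee\sigma^{-n}\mathcal P_m)=H_n+H_m$. Fekete's lemma then gives $H(\beta,p)=\lim_n\tfrac1nH_n=\inf_n\tfrac1nH_n\le\tfrac1nH_n$, which is the upper bound.

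For the lower bound I fix $n$ and exploit a telescoping in which I always split off the \emph{first} block of length $n$. Conditioning on $\mathcal P_{kn}$, the total value $t$ together with the prefix value $s=\sum_{i=1}^n a_i\beta^{-i}$ determines the value of the remaining block, so the refinement $\mathcal P_n\vee\sigma^{-n}\mathcal P_{(k-1)n}$ is generated, given $\mathcal P_{kn}$, by $\mathcal P_n$ alone. Combined with the independence used above this yields the exact defect identity
\[
H_{kn}=H_n+H_{(k-1)n}-H_{m_p}(\mathcal P_n\mid\mathcal P_{kn}).
\]
The conditional entropy $H_{m_p}(\mathcal P_n\mid\mathcal P_{kn})$ is at most the logarithm of the largest number of distinct prefix values $s$ that are compatible with a single total value $t$; I will show this count is bounded by a quantity $K_n$ \emph{independent of $k$}. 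Granting this, the identity iterates to $H_{kn}\ge kH_n-(k-1)\log K_n$; dividing by $kn$ and letting $k\to\infty$ gives $H(\beta,p)\ge\tfrac1nH_n-\tfrac1n\log K_n$, and it only remains to check that $\log K_n\le C+l\log(n+1)$.

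The heart of the argument, and the step I expect to be hardest, is this counting bound. A prefix value $s$ is compatible with $t$ precisely when the completing suffix value $u=\beta^n(t-s)$ lies in the range $[0,\tfrac1{\beta-1})$ of achievable suffix values; hence all compatible $s$ lie in a real interval of length $\beta^{-n}/(\beta-1)$, and the associated algebraic integers $x=\beta^n s=\sum_{i=1}^n a_i\beta^{n-i}\in\mathbb Z[\beta]$ are distinct and confined to an interval of length $1/(\beta-1)$ in the embedding given by $\beta=\beta^{(1)}$. The crucial observation is that every conjugate coordinate is also controlled: applying the Galois identity $s^{(k)}=t^{(k)}-(\beta^{(k)})^{-n}u^{(k)}$ together with $|u^{(k)}|\le 1/(|\beta^{(k)}|-1)$, each expanding conjugate value $x^{(k)}=(\beta^{(k)})^n s^{(k)}$ is \emph{pinned} to within $1/(|\beta^{(k)}|-1)$ of the fixed number $(\beta^{(k)})^n t^{(k)}$, while for contracting conjugates $|x^{(k)}|\le 1/(1-|\beta^{(k)}|)$ holds automatically, and for the $l$ unimodular conjugates $|x^{(k)}|\le n$. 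Thus the distinct points $x$ lie in a box whose $d$ side lengths are $\frac{1}{||\beta^{(k)}|-1|}$ (up to a factor $2$) for $|\beta^{(k)}|\neq1$ and $O(n)$ for the $l$ unimodular conjugates. Since the Minkowski embedding of $\mathbb Z[\beta]$ is a lattice of covolume at least one, a lattice\-point count in this box gives at most $K_n:=2^d\prod_{i:|\beta^{(i)}|\neq1}\frac{1}{||\beta^{(i)}|-1|}(n+1)^l+1$ distinct values, and indeed $\log K_n\le C+l\log(n+1)$ since $(n+1)^l\ge1$. \textbf{Main obstacle.} The conceptual content is the pinning of the expanding conjugates, which removes the apparent exponential growth of the box in those directions; the residual difficulty is bookkeeping the lattice\-point count so that it produces precisely the clean constant $C$ with no discriminant term, which requires using that the covolume is at least one and absorbing the boundary contribution into the single additive $+1$.
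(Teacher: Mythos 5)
Your overall architecture is sound and genuinely different from the paper's: instead of introducing the matrix quantity $L_n(\beta,p)$ and proving it superadditive (the route via Lemma~\ref{MatrixLemma} and Proposition~\ref{prop:boundsonH}), you work directly with the partition entropies, and your exact defect identity $H_{kn}=H_n+H_{(k-1)n}-H_{m_p}(\mathcal P_n\mid\mathcal P_{kn})$ is correct (given the total value $t$, the prefix value $s$ determines the suffix value $u=\beta^n(t-s)$, and the two blocks are $m_p$-independent). The subadditivity/Fekete upper bound is fine, the reduction to counting prefix values compatible with a fixed $t$ is fine, the bound is correctly uniform in $k$, and your ``pinning'' of the expanding conjugates via $x^{(j)}=(\beta^{(j)})^n t^{(j)}-u^{(j)}$ with $|u^{(j)}|\leq 1/(|\beta^{(j)}|-1)$ is exactly the mechanism encoded in the paper's set $V_{\beta,n}$.

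The genuine gap is the final counting step, which you yourself flag as the residual difficulty: it cannot be done by a volume/covolume argument. First, ``the Minkowski embedding of $\mathbb Z[\beta]$ is a lattice of covolume at least one'' is not a triviality: the covolume is $2^{-r_2}\sqrt{|\mathrm{disc}(\mathbb Z[\beta])|}$, and the only free input ($\mathrm{disc}\in\mathbb Z\setminus\{0\}$) gives covolume $\geq 2^{-r_2}$; indeed for $\mathbb Z[(1+\sqrt{-3})/2]$ the covolume is $\sqrt 3/2<1$, so your claim needs Minkowski's discriminant bound (using that $\beta$ is real), a nontrivial import you do not supply. Second, and fatally, even granting covolume $\geq 1$, the number of lattice points in a box is \emph{not} bounded by volume plus one: in $\mathbb Z^2$ the box $[0,N]\times[0,\tfrac1{10}]$ has volume $N/10$ but $N+1$ lattice points. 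Your box is precisely of this dangerous shape — thin (length $1/(\beta-1)<2$) in the distinguished real coordinate while of size $\asymp n$ in the $l$ unimodular coordinates — and complex conjugate pairs produce discs rather than squares, so any honest volume count picks up factors of $\pi$, $2^{-r_2}$ and the discriminant, and cannot yield the clean constant $2^d\prod_{|\beta^{(i)}|\neq 1}\frac{1}{||\beta^{(i)}|-1|}$ with a single additive $+1$ that Theorem~\ref{thm1} asserts. The correct repair is the paper's one-dimensional separation argument (Lemma~\ref{separation}, reused in the proof of Proposition~\ref{prop:boundsonH}): for two distinct compatible prefix values $x\neq y$ one has $x-y=\sum_{i=1}^{n}\epsilon_i\beta^{n-i}$ with $\epsilon_i\in\{-1,0,1\}$, your pinning bounds (doubled) control every conjugate of $x-y$, and since $\prod_{j=1}^d (x-y)^{(j)}$ is a nonzero rational integer of modulus $\geq 1$, the first coordinate satisfies $|x-y|\geq C_0(n)$; a count in the interval of length $2/(\beta-1)$ then gives at most $C(\beta)(n+1)^l+1=K_n$ compatible values exactly. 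With that substitution (integrality of the norm replacing covolume, separation replacing volume), your defect-identity proof goes through and delivers the stated constant $C$.
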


Theorem~\ref{thm1} is proved by giving lower bounds for $H(\beta,p)$
in terms of products of matrices.

\begin{theorem}\label{thm2}
  There exists a pair of matrices $M_0=M_0(\beta,p)$ and
  $M_1=M_1(\beta,p)$, with rows and columns indexed by a set $\mathcal
  A$, such that the sequence
  \[
  \frac{1}{n}L_n(\beta,p):= -\frac{1}{n} \sup_{i\in \mathcal A}
  \sum_{a_1 \cdots a_n \in \{0,1\}^n} m_p [a_1\cdots a_n] \log \Biggl(
  \sum_{j \in \mathcal A} (M_{a_1} \cdots M_{a_n})_{i,j} \Biggr)
  \]
  converges to $H(\beta,p)$ from below as $n\to\infty$, and
  $\frac{1}{n} L_n (\beta, p) \leq H(\beta, p)$.
\end{theorem}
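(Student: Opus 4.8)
\emph{Plan.} The idea is to realise $\mathcal M_n(\underline a)$ through a finite weighted automaton that reads the digits $a_1,\dots,a_n$ and tracks all admissible competitor words $b_1\cdots b_n$ with $\sum_i b_i\beta^{-i}=\sum_i a_i\beta^{-i}$. Writing $c_i=b_i-a_i\in\{-1,0,1\}$, the coincidence condition is $\sum_{i=1}^n c_i\beta^{\,n-i}=0$ in $\mathbb{Z}[\beta]$, so I would let the state after $k$ steps be $s_k=\sum_{i=1}^k c_i\beta^{\,k-i}$, updated by $s_k=\beta s_{k-1}+c_k$ from $s_0=0$. Embedding $\mathbb{Z}[\beta]$ by its Galois conjugates, coincidence at time $n$ forces $s_n^{(j)}=\sum_i c_i(\beta^{(j)})^{\,n-i}=0$ for every $j$. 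I would record only the coordinates $j$ with $|\beta^{(j)}|\neq1$: for those, combining $s_n^{(j)}=0$ with $s_n^{(j)}=(\beta^{(j)})^{\,n-k}s_k^{(j)}+\sum_{i>k}c_i(\beta^{(j)})^{\,n-i}$ forces $|s_k^{(j)}|\le 1/\bigl||\beta^{(j)}|-1\bigr|$ at \emph{every} intermediate time. Hence the relevant projected states lie in a fixed compact box, and since they also lie on the lattice image of $\mathbb{Z}[\beta]$, there are finitely many of them; this finite set is $\mathcal A$, and the product $\prod_{j:|\beta^{(j)}|\ne1}1/\bigl||\beta^{(j)}|-1\bigr|$ governing $|\mathcal A|$ is exactly the one in the constant $C$ of Theorem~\ref{thm1}. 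The matrices $M_0,M_1$ are the weighted transition matrices of this automaton: reading digit $a$, a choice $b\in\{0,1\}$ of weight $p$ or $1-p$ sends $s\mapsto\beta s+(b-a)$, and the transition is kept only when the target remains in $\mathcal A$, so $(M_a)_{s,s'}=m_p(\{b\})$ when $s'=\beta s+(b-a)\in\mathcal A$, and all row sums are at most $1$.

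With this dictionary, $(M_{a_1}\cdots M_{a_n})_{i,j}$ is the total $m_p$-weight of competitor words whose projected difference-state travels from $i$ to $j$ inside $\mathcal A$, so $\sum_j(M_{a_1}\cdots M_{a_n})_{i,j}$ is the weight of all competitor words keeping the state in $\mathcal A$ for $n$ steps starting from $i$. Taking $i$ to be the zero state, every word $\underline b$ realising the exact coincidence keeps its projected state in $\mathcal A$ and returns to $0$, whence $\sum_j(M_{a_1}\cdots M_{a_n})_{0,j}\ge(M_{a_1}\cdots M_{a_n})_{0,0}\ge\mathcal M_n(\underline a)$; the inequality overcounts because we have dropped the neutral coordinates $|\beta^{(j)}|=1$ and no longer insist on returning exactly to $0$. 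Consequently $-\sum_{\underline a}m_p[\underline a]\log\sum_j(M_{a_1}\cdots M_{a_n})_{0,j}\le H_n(\beta,p)$, and since the displayed $L_n$ is the infimum over the initial index $i$ of such expressions, $\tfrac1nL_n(\beta,p)\le\tfrac1nH_n(\beta,p)$. The infimum over $i$ is precisely what will force $L_n$ well below $H_n$ and thereby keep the bound valid for every finite $n$.

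For the convergence I would pass to the companion quantity $\Theta_n:=-\sum_{\underline a}m_p[\underline a]\log\|M_{a_1}\cdots M_{a_n}\|_\infty$, where $\|\cdot\|_\infty$ is the maximal row-sum norm $\|A\|_\infty=\max_i\sum_j A_{i,j}$ for nonnegative $A$. Submultiplicativity of $\|\cdot\|_\infty$ together with the product structure of $m_p$ makes $\Theta_n$ superadditive, so Fekete's lemma gives that $\tfrac1n\Theta_n$ increases to its limit from below. Since $\|M_{a_1}\cdots M_{a_n}\|_\infty\ge\sum_j(M_{a_1}\cdots M_{a_n})_{i,j}$ for each $i$, one has $\Theta_n\le L_n\le H_n$, and the same submultiplicativity yields $L_{m+n}\ge L_m+\Theta_n$. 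Iterating the latter with $\tfrac1n\Theta_n\to H(\beta,p)$ forces $\liminf_n\tfrac1nL_n\ge H(\beta,p)$, while the sandwich gives $\limsup_n\tfrac1nL_n\le\lim_n\tfrac1nH_n=H(\beta,p)$, pinning the limit. To see that both outer limits equal $H(\beta,p)$ I would bound the maximal row sum \emph{above} by $\mathcal M_n(\underline a)$ times a subexponential factor: the overcount has only three bounded sources — a free endpoint (at most $|\mathcal A|$ values), a free start (handled by lattice translation), and the neglected neutral coordinates, each an integer combination of modulus at most $n$ and hence taking $O\bigl((n+1)^l\bigr)$ values, with $l$ the number of conjugates of modulus $1$. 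This gives $H_n\ge\Theta_n\ge H_n-l\log(n+1)-O(1)$, which is exactly the estimate behind Theorem~\ref{thm1}.

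The main obstacle is the per-$n$ upper bound $\tfrac1nL_n\le H(\beta,p)$ itself. Because $\tfrac1nH_n$ typically exceeds $H(\beta,p)$ and $L_n\le H_n$ alone only compares $\tfrac1nL_n$ with $\tfrac1nH_n$, the claim is genuine: one must show that the infimum over the initial index drags $L_n$ below $nH(\beta,p)$, equivalently that $\sup_i\sum_{\underline a}m_p[\underline a]\log\sum_j(M_{a_1}\cdots M_{a_n})_{i,j}$ is subadditive in $n$. Reconciling the supremum over the initial index with the averaging over $\underline a$ in this submultiplicativity argument — so that the resulting sequence is monotone from below and bounded by $H(\beta,p)$ at every $n$, not merely in the limit — is the delicate point; by contrast the construction of $\mathcal A$ and the norm submultiplicativity are routine once the projected difference-state is set up correctly.
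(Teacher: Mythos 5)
Your construction is the paper's construction in different clothes: your projected difference-state automaton, with transitions $s\mapsto\beta s+(b-a)$ weighted by $m_p$, is exactly the graph $\mathcal G$ on $V_\beta$ and the matrices $M_0,M_1$ of Section~3, and your derivation of $L_n(\beta,p)\le H_n(\beta,p)$ via the row of the zero state matches the paper. The genuine gap is the one you yourself flag and then leave open: the per-$n$ inequality $\frac1nL_n(\beta,p)\le H(\beta,p)$, which is part of the statement of Theorem~\ref{thm2}. Nothing you establish yields it. Your mixed inequality $L_{m+n}\ge L_m+\Theta_n$ together with $\Theta_n\le L_n$ points the wrong way: superadditivity of $\Theta_n$ bounds $\frac1n\Theta_n$ above by $H(\beta,p)$, but $L_n\ge\Theta_n$, so this gives no upper bound on $\frac1nL_n$. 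What the paper proves instead is full superadditivity of $L_n$ itself, $L_{n+m}\ge L_n+L_m$ (the lemma preceding Proposition~\ref{prop:boundsonH}, obtained from the pointwise bound $\sum_j(M_{a_1}\cdots M_{a_{n+m}})_{i,j}\le\sum_k(M_{a_1}\cdots M_{a_n})_{i,k}\sup_l\sum_j(M_{a_{n+1}}\cdots M_{a_{n+m}})_{l,j}$, with both suprema then taken outside the double integral); Fekete's lemma then gives $\frac1nL_n\le\lim_m\frac1mL_m\le\lim_m\frac1mH_m=H(\beta,p)$ for every $n$. The interchange of the supremum over the middle index with the average over $\underline b$ — precisely the step you call delicate — is the content of that lemma, and without it your argument proves only the limit statement, not the claimed bound at every finite $n$.

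Two further points break your proposal outside the hyperbolic case, which Theorem~\ref{thm2} covers. First, your finiteness claim for $\mathcal A$ is false when $l\ge1$: discarding the unit-modulus coordinates projects the Minkowski lattice of $\mathbb{Z}[\beta]$ onto a proper subspace, and such a projection need not be discrete, so ``compact box intersect lattice image'' does not give finiteness. The paper instead keeps the real value as the state, allows $\mathcal A$ to be countably infinite, and controls growth via Garsia's separation (Lemma~\ref{separation}): the product of conjugates of a nonzero element is a nonzero integer, forcing separation $C_0(n)\asymp(n+1)^{-l}$ and hence $|V_{\beta,n}|\le C(\beta)(n+1)^l+1$. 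This also supplies the counting that your phrase ``an integer combination of modulus at most $n$ and hence taking $O\bigl((n+1)^l\bigr)$ values'' merely asserts — a bounded modulus alone does not bound the number of values; the integrality argument does. Second, your convergence route through $\Theta_n=L_n'$ only works for finite $\mathcal A$: bounding $\sum_{x\in X_n}\lVert M_{x,n}\rVert$ requires summing the key estimate over all rows, which in the paper introduces the factor $|\mathcal A|$ — this is why the norm version is Theorem~\ref{thm3} and is proved only for hyperbolic $\beta$. For general algebraic integers the paper bounds $L_n$ directly: the counting inequality \eqref{FinalIneq}, $\sum_{x\in X_n}\sum_j(M_{x,n})_{i,j}\le C(\beta)(n+1)^l+1$, is uniform in the single starting row $i$ because in $L_n$ the supremum sits outside the sum over $\underline a$, yielding $\frac1nL_n\ge\frac1nH_n-\frac1n\log\bigl(C(\beta)(n+1)^l+1\bigr)$ and hence convergence. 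To repair your argument you would need this direct bound on $L_n$ in place of the $\Theta_n$ detour, together with the superadditivity lemma for $L_n$.
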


The set $\mathcal A$ is finite (with size bounded by $C(\beta)$ given
by \eqref{eq:C(beta)definition}) whenever $\beta$ is hyperbolic,
i.e.\ when it has no Galois conjugates of modulus one. In this case
the matrices $M_0, M_1$ are computable by a finite time algorithm. If
$\beta$ is not hyperbolic then $\mathcal A$ might be countably
infinite, but the matrices $M_0, M_1$ have at most two non-zero terms
in any row.

Theorems~\ref{thm1} and \ref{thm2} are proved by bounding the
difference between $H_n(\beta,p)$ and $L_n(\beta,p)$. When $\beta$ is
hyperbolic, and so $\mathcal A$ is finite, Theorem~\ref{thm2} can be
expressed in the more familiar form of the Lyapunov exponent of the
pair of matrices $M_0, M_1$.

\begin{theorem}\label{thm3}
  When $\beta$ is hyperbolic, the sequence
  \[
  \frac{1}{n}L_n'(\beta,p):=-\frac{1}{n}\sum_{a_1\cdots
    a_n\in\{0,1\}^n}p(a_1\cdots a_n)\log(\lVert M_{a_1}\cdots
  M_{a_n}\rVert)
  \]
  converges to $H(\beta,p)$ as $n\to\infty$, and $\frac{1}{n} L_n'
  (\beta, p) \leq \frac{1}{n} L_n (\beta, p) \leq H(\beta, p)$.
\end{theorem}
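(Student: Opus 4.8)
The plan is to take $\lVert\cdot\rVert$ to be the maximum row-sum norm $\lVert P\rVert=\max_{i\in\mathcal A}\sum_{j\in\mathcal A}|P_{ij}|$, which for the nonnegative matrices $M_0,M_1$ of Theorem~\ref{thm2} equals $\max_i\sum_j P_{ij}$, and to write $p(a_1\cdots a_n)=m_p[a_1\cdots a_n]$ and $P(\underline a)=M_{a_1}\cdots M_{a_n}$. First I would establish the inequality $\tfrac1n L_n'\le\tfrac1n L_n$. For every fixed $i\in\mathcal A$ and every word $a_1\cdots a_n$ one has $\log\lVert P(\underline a)\rVert=\max_{i'}\log\sum_j P(\underline a)_{i'j}\ge\log\sum_j P(\underline a)_{ij}$; multiplying by $m_p[a_1\cdots a_n]\ge0$ and summing over words gives $-\tfrac1n L_n'\ge-\tfrac1n\sum_a m_p[a]\log\sum_j P(\underline a)_{ij}$. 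Taking the supremum over $i$ on the right-hand side yields $-\tfrac1n L_n'\ge-\tfrac1n L_n$, that is $\tfrac1n L_n'\le\tfrac1n L_n$; combined with $\tfrac1n L_n\le H$ from Theorem~\ref{thm2} this gives the whole inequality chain $\tfrac1n L_n'\le\tfrac1n L_n\le H$.

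Next I would prove that the limit exists. Since $\lVert\cdot\rVert$ is submultiplicative and $m_p$ is a Bernoulli, hence stationary product, measure, the sequence $g_n:=-L_n'=\sum_a m_p[a]\log\lVert P(\underline a)\rVert$ is subadditive: splitting a word of length $n+m$ and using $\lVert P(\underline a\,\underline a')\rVert\le\lVert P(\underline a)\rVert\,\lVert P(\underline a')\rVert$ together with the factorisation of $m_p$ gives $g_{n+m}\le g_n+g_m$. By Fekete's lemma $\tfrac1n g_n\to\inf_n\tfrac1n g_n$, so $\tfrac1n L_n'$ converges to a limit $\ell'$ with $\tfrac1n L_n'\le\ell'$ for every $n$ (this is legitimate because $\mathcal A$ is finite by hyperbolicity; it is the Furstenberg--Kesten theorem in this setting). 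The inequality chain then immediately forces $\ell'=\lim\tfrac1n L_n'\le\lim\tfrac1n L_n=H$.

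The remaining, and main, step is the matching lower bound $\ell'\ge H$, equivalently $\tfrac1n(L_n-L_n')\to0$. Here I would show that for the matrices produced by Theorem~\ref{thm2} every individual row sum grows at the same exponential rate as the norm: for each fixed $i\in\mathcal A$, $\tfrac1n\log\sum_j P(\underline a)_{ij}\to -H$ for $m_p$-a.e.\ $\underline a$ and in $L^1(m_p)$. Granting this, the maximum over the finite set $\mathcal A$ of finitely many sequences each converging to $-H$ also converges to $-H$, so $\tfrac1n\log\lVert P(\underline a)\rVert=\tfrac1n\max_i\log\sum_j P(\underline a)_{ij}\to-H$, and hence $\tfrac1n L_n'\to H$ by bounded convergence (the normalised quantities are uniformly bounded because $\mathcal A$ is finite, the entries of $M_0,M_1$ are bounded, and the relevant row sums are bounded below, e.g.\ by $\min(p,1-p)^n$).

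The content of this last claim is an irreducibility, or comparability, property of $M_0,M_1$, and it is the part I expect to require the most care. Concretely, I would use the explicit combinatorial structure of the state set $\mathcal A$ from the construction of Theorem~\ref{thm2}---the existence of a distinguished state that is reachable from and to the relevant states, or a primitive power of an admissible product---to bound the ratio of the largest to the smallest relevant row sum of $P(\underline a)$ by a factor that is subexponential in $n$. This forces each row-sum growth rate, which is a priori only $\le -H$ since row sums are dominated by the norm, to in fact equal $-H$, so that passing from the norm to any single row costs only an additive $o(n)$. This comparison step is where the argument genuinely uses the structure of the matrices of Theorem~\ref{thm2}, and the finiteness of $\mathcal A$ guaranteed by hyperbolicity is essential both for the norm to be well defined and for the finite maximum to preserve the limit.
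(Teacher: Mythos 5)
Your first two steps are correct and consistent with the paper: the chain $\frac{1}{n}L_n'\le\frac{1}{n}L_n\le H(\beta,p)$ follows exactly as you argue, and subadditivity of $-L_n'$ via submultiplicativity of the row-sum norm plus Fekete gives existence of the limit. The genuine gap is in your main step. You reduce the theorem to the claim that for every fixed $i\in\mathcal A$ the row sum $\sum_{j}(M_{a_1}\cdots M_{a_n})_{i,j}$ decays at rate $e^{-nH}$ almost everywhere and in $L^1$, to be extracted from an irreducibility/comparability property of the pair $(M_0,M_1)$ --- a ``distinguished state reachable from and to the relevant states, or a primitive power of an admissible product''. You never establish this property, and as stated it is false in general: the graph $\mathcal G$ on $V_\beta$ typically contains redundant vertices with no path back to $0$ (the paper later introduces the pruned graph $\mathcal G'$ precisely because such vertices behave differently), a row of $M_0$ or $M_1$ can even vanish identically (nothing prevents both $\beta x_i$ and $\beta x_i-1$, resp.\ $\beta x_i$ and $\beta x_i+1$, from falling outside $V_\beta$), so your proposed lower bound $\min(p,1-p)^n$ on row sums fails and the logarithm of a row sum may be $-\infty$. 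No primitivity of admissible products is available in general, and since this comparability claim carries the entire analytic content of the theorem, the proposal as it stands does not prove it.

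The paper's proof avoids irreducibility altogether, and you could close the gap by adopting its mechanism: repeat the concavity-of-$\log$ (Jensen) computation from the proof of Proposition~\ref{prop:boundsonH}, with $\lVert M_{x,n}\rVert$ in place of the single row sum, to get $L_n'(\beta,p)\ge H_n(\beta,p)-\log\bigl(\sum_{x\in X_n}\lVert M_{x,n}\rVert\bigr)$, and then bound $\sum_{x\in X_n}\lVert M_{x,n}\rVert\le\sum_{x\in X_n}\sum_{i\in\mathcal A}\sum_{j\in\mathcal A}(M_{x,n})_{i,j}\le|\mathcal A|\,(C(\beta)+1)$ by summing the separation inequality~\eqref{FinalIneq} (with $l=0$, since $\beta$ is hyperbolic) over $i\in\mathcal A$. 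It is this Garsia-type separation estimate --- not any mixing property of the matrices --- that forces every row to have the correct average growth; note that it even rules out the pathologies above on average, since a state whose row sums stayed bounded away from $e^{-nH_n}$ uniformly over $x\in X_n$ would violate~\eqref{FinalIneq}. This yields $\frac{1}{n}L_n'\ge\frac{1}{n}H_n-\frac{1}{n}\log(|\mathcal A|(C(\beta)+1))\to H(\beta,p)$, i.e.\ convergence with an explicit $O(1/n)$ rate and no pointwise ergodic-theoretic input at all, whereas your route, even if the comparability step could be salvaged for the pruned graph, would deliver only convergence without a rate.
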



An immediate corollary is that we can express the Garsia entropy as
the Lyapunov exponent of the matrices $M_0, M_1$ associated with the
$(p, 1-p)$-Bernoulli product measure.

\begin{cor}\label{cor1}
  If $\beta$ is hyperbolic then the Garsia entropy $H(\beta,p)$ is the
  limit of the sequence
  \[
  -\frac{1}{n}\log \lVert M_{a_1}\cdots M_{a_n} \rVert
  \]
  for $m_p$-a.e.\ $\underline a \in\{0,1\}^{\mathbb N}.$
\end{cor}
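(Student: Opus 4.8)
The plan is to recognise this as a direct application of Kingman's subadditive ergodic theorem to the random matrix products governed by the Bernoulli measure $m_p$, using Theorem~\ref{thm3} to identify the value of the limit. First I would observe that the map $\underline a \mapsto f_n(\underline a) := \log \lVert M_{a_1} \cdots M_{a_n} \rVert$ defines a subadditive cocycle over the left shift $\sigma$ on $\Sigma = \{0,1\}^{\mathbb N}$: submultiplicativity of the operator norm gives $\lVert M_{a_1}\cdots M_{a_{n+m}}\rVert \leq \lVert M_{a_1}\cdots M_{a_n}\rVert \cdot \lVert M_{a_{n+1}}\cdots M_{a_{n+m}}\rVert$, and hence $f_{n+m}(\underline a) \leq f_n(\underline a) + f_m(\sigma^n \underline a)$.

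Next I would verify the hypotheses of the subadditive ergodic theorem. The Bernoulli product measure $m_p$ is $\sigma$-invariant and ergodic. Since $\beta$ is hyperbolic, the index set $\mathcal A$ is finite, so $M_0$ and $M_1$ are genuine finite matrices; consequently $\lVert M_0 \rVert$ and $\lVert M_1 \rVert$ are finite positive constants and $f_1 \in L^1(m_p)$, as $f_1$ takes only two values. Kingman's theorem then yields a constant $\lambda$ with $\frac{1}{n} f_n(\underline a) \to \lambda$ for $m_p$-a.e.\ $\underline a$, where $\lambda = \lim_{n\to\infty} \frac{1}{n}\int_\Sigma f_n \, dm_p = \inf_n \frac{1}{n}\int_\Sigma f_n\, dm_p$.

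It remains to identify $\lambda$. By the definition of $m_p$, for each $n$ we have
\[
\int_\Sigma f_n\, dm_p = \sum_{a_1\cdots a_n \in \{0,1\}^n} m_p[a_1\cdots a_n]\,\log \lVert M_{a_1}\cdots M_{a_n}\rVert = -L_n'(\beta,p),
\]
so that $\frac{1}{n}\int_\Sigma f_n\, dm_p = -\frac{1}{n} L_n'(\beta,p)$. By Theorem~\ref{thm3} the right-hand side converges to $-H(\beta,p)$, whence $\lambda = -H(\beta,p)$. Therefore $-\frac{1}{n}\log \lVert M_{a_1}\cdots M_{a_n}\rVert = -\frac{1}{n} f_n(\underline a) \to H(\beta,p)$ for $m_p$-a.e.\ $\underline a$, as claimed.

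I expect no serious obstacle here, since the statement is essentially the Furstenberg--Kesten / Kingman theorem combined with the already-established convergence of the averages in Theorem~\ref{thm3}. The only point requiring a little care is ensuring that the almost-everywhere limit is finite and equals the limit of the integral averages rather than merely being dominated by it; this is precisely the content of Kingman's theorem together with the $L^1$ integrability of $f_1$, which the hyperbolicity assumption guarantees by rendering $\mathcal A$ finite.
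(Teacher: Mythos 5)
Your proof is correct and takes essentially the same route as the paper, which simply cites the Furstenberg--Kesten theorem applied to the i.i.d.\ matrix products under $m_p$ together with Theorem~\ref{thm3} to identify the limit; your Kingman-based argument is the standard proof of that very result. Your verification of integrability via finiteness of $\mathcal A$ (and implicitly that the products never vanish, since $(M_{a_1}\cdots M_{a_n})_{11}\geq \min(p,1-p)^n>0$) fills in exactly the routine hypotheses the paper leaves implicit.
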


That corollary~\ref{cor1} follows from Theorem~\ref{thm3} is an
immediate application of the main result of \cite{FurstenbergKesten}.

\section{Preliminary Results}
In this section we recall some standard algebraic lemmas as well as
ideas about separation of polynomials originating in the work of
Garsia \cite{GarsiaAC}.

Let $\beta=\beta^{(1)}\in(1,2)$ be an algebraic integer of degree
$d$. Let $\beta^{(2)}, \ldots, \beta^{(r)}$ denote the algebraic
conjugates of $\beta$ of modulus strictly larger than one,
$\beta^{(r+1)}, \ldots, \beta^{(r+l)}$ conjugates of modulus $1$, and
$\beta^{(r+l+1)}, \ldots, \beta^{(d)}$ conjugates of modulus less than
one.

The following lemmas are standard.

\begin{lemma}\label{basic1}
  If $\sum_{i=1}^n \epsilon_i\beta^{-i}=0$ for
  $\epsilon_i\in\{-1,0,1\}$ then
  \[
  \sum_{i=1}^n \epsilon_i(\beta^{(j)})^{-i}=0
  \]
  for each $j\in\{2,\ldots,d\}$.
\end{lemma}

\begin{lemma}
  Let $P$ be a polynomial with integer coefficients. Then the product
  $P(\beta)P(\beta^{(2)})\cdots P(\beta^{(d)})$ is an integer.
\end{lemma}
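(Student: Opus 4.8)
The plan is to exploit the fact that $\beta^{(1)},\ldots,\beta^{(d)}$ are precisely the complete set of roots of the minimal polynomial of $\beta$, so that the product in question is a symmetric function of these roots. First I would set $Q(x_1,\ldots,x_d):=\prod_{i=1}^d P(x_i)$ and observe that $Q$ is a symmetric polynomial in $x_1,\ldots,x_d$ with integer coefficients, since $P$ itself has integer coefficients.

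Next I would invoke the fundamental theorem of symmetric polynomials in its integral form: every symmetric polynomial with coefficients in $\mathbb{Z}$ is a polynomial with coefficients in $\mathbb{Z}$ in the elementary symmetric polynomials $e_1,\ldots,e_d$. Thus $Q=R(e_1,\ldots,e_d)$ for some $R\in\mathbb{Z}[y_1,\ldots,y_d]$.

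Then I would evaluate at $x_i=\beta^{(i)}$. Writing the minimal polynomial of $\beta$ as $m(x)=\prod_{i=1}^d(x-\beta^{(i)})=x^d-e_1x^{d-1}+\cdots+(-1)^d e_d$, where $e_k=e_k(\beta^{(1)},\ldots,\beta^{(d)})$, the hypothesis that $\beta$ is an algebraic integer gives $m\in\mathbb{Z}[x]$, so each $e_k\in\mathbb{Z}$. Hence $\prod_{i=1}^d P(\beta^{(i)})=R(e_1,\ldots,e_d)\in\mathbb{Z}$, as required.

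I expect this to be entirely routine; the only point requiring a little care is the integrality claim in the fundamental theorem of symmetric polynomials, namely that the coefficients of $R$ can be taken in $\mathbb{Z}$ rather than merely in $\mathbb{Q}$, which follows from the standard elimination algorithm carried out over $\mathbb{Z}$. An alternative and equally clean route sidesteps the symmetric-function bookkeeping: each $P(\beta^{(i)})$ is an algebraic integer, so the product is an algebraic integer; on the other hand the product is fixed by the Galois group of the splitting field of $m$ (which merely permutes the conjugates $\beta^{(i)}$), hence lies in $\mathbb{Q}$; and a rational algebraic integer is a rational integer.
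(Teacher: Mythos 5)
Your proposal is correct. The paper offers no proof of this lemma at all --- it is stated under the remark ``The following lemmas are standard'' --- so there is nothing to compare against line by line; your argument is a correct rendition of the standard proof. The symmetric-function route is complete as written: $\prod_{i=1}^d P(x_i)$ is symmetric over $\mathbb{Z}$, hence an integer polynomial $R$ in the elementary symmetric functions $e_1,\ldots,e_d$, and since $\beta$ is an algebraic integer its minimal polynomial is monic in $\mathbb{Z}[x]$, so each $e_k(\beta^{(1)},\ldots,\beta^{(d)})\in\mathbb{Z}$ and $R(e_1,\ldots,e_d)\in\mathbb{Z}$. You also correctly locate the only place the algebraic-integer hypothesis enters (the integrality of the $e_k$), which matches the paper's own remark immediately after the lemma that the statement fails for general algebraic numbers --- for a non-monic minimal polynomial the $e_k$ are merely rational. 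Your alternative Galois argument is equally valid and arguably cleaner: each $\beta^{(i)}$ satisfies the same monic integer polynomial, so each $P(\beta^{(i)})$ is an algebraic integer, their product is Galois-invariant and hence rational, and a rational algebraic integer lies in $\mathbb{Z}$; this version requires no bookkeeping with the fundamental theorem of symmetric polynomials, at the cost of invoking the ring structure of algebraic integers and basic Galois theory rather than purely elementary algebra.
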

Note that this second lemma requires that $\beta$ is an algebraic
integer, i.e. the root of a polynomial with integer coefficients whose
leading term is $1$. It does not hold for all algebraic numbers.

Define the set
$V_{\beta,n}\subset\left[\dfrac{-1}{\beta-1},\dfrac{1}{\beta-1}\right]$
by
\begin{multline*}
  V_{\beta,n}:=\biggl\{ x=\sum_{i=0}^n \epsilon_i \beta^{n-i}:
  \epsilon_i\in\{-1,0,1\} \text{ and }\\ \left|\sum_{i=0}^n
  \epsilon_i(\beta^{(j)})^{n-i}\right|\leq \frac{1}{|\beta^{(j)}|-1}
  \ \text{ for all } j\in \{1,\ldots r\} \biggr\}.
\end{multline*}
Let
\[V_{\beta}:=\bigcup_{n=0}^{\infty} V_{\beta,n}.
\]

\begin{lemma}
  \label{lem-2.3}
  Suppose that $\sum_{i=0}^n \epsilon_i\beta^{n-i}=0$. Then
  \[
  \sum_{i=0}^m \epsilon_i\beta^{m-i}\in V_{\beta}
  \]
  for each $m\in\{0,\ldots,n\}$.
\end{lemma}
\begin{proof}
  Suppose on the contrary that
  \[
  \sum_{i=0}^m\epsilon_i\beta^{m-i}\not\in V_\beta \qquad \text{for
    some } m\in \{0, 1,\ldots, n-1\}.
  \]
  Then by definition, there exists
  $j\in\{1,\ldots,r\}$ such that
  \[
  \left|\sum_{i=0}^m \epsilon_i(\beta^{(j)})^{m-i}\right|>
  \frac{1}{|\beta^{(j)}|-1}.
  \]
  Then
  \begin{align*}
    \left|\sum_{i=0}^{m+1} \epsilon_i(\beta^{(j)})^{m+1-i}\right| &=
    \left|\epsilon_{m+1}+\beta^{(j)} \sum_{i=0}^m \epsilon_i
    (\beta^{(j)})^{m-i}\right|\\ &\geq |\beta^{(j)}| \left|
    \sum_{i=0}^m \epsilon_i(\beta^{(j)})^{m-i}\right|-1\\ &\geq
    \dfrac{|\beta^{(j)}|}{|\beta^{(j)}|-1}-1 \geq
    \frac{1}{|\beta^{(j)}|-1}.
  \end{align*}
  Iterating this argument gives that
  \[
  \left|\sum_{i=0}^n
  \epsilon_i(\beta^{(j)})^{n-i}\right|>\frac{1}{|\beta^{(j)}|-1}.
  \]
  But by Lemma~\ref{basic1} the quantity on the left hand side is
  equal $0$, since $\sum_{i=0}^n \epsilon_i\beta^{n-i}=0$. This gives
  a contradiction.
\end{proof}

Let
\begin{equation} \label{eq:C(beta)definition}
  C(\beta) := 2^d \prod_{j=1}^r
  \frac{1}{|\beta^{(j)}|-1}\prod_{k=r+l+1}^{d}\frac{1}{1-|\beta^{(k)}|}
  = 2^d \prod_{|\beta^{(j)}| \neq 1} \frac{1}{||\beta^{(j)}|-1|}.
\end{equation}
This is a product over all roots which do not have modulus one. The
following lemma is essentially due to Garsia, see also
\cite{Frougny92, ATZ, Mercat2013}.

\begin{lemma}\label{separation}
  We have
  \[
  |V_{\beta,n}|\leq C (\beta) (n+1)^l + 1.
  \]
  In particular, if $\beta$ is hyperbolic then $V_{\beta}$ is finite.
\end{lemma}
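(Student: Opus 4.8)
The plan is to prove a \emph{separation} estimate: any two distinct elements of $V_{\beta,n}$ are at least $\delta_n$ apart, where $\delta_n^{-1}$ grows like $C(\beta)(n+1)^l$. Since $V_{\beta,n}$ lies inside an interval of length $\tfrac{2}{\beta-1}$, a one--dimensional packing argument then gives $|V_{\beta,n}|\le \tfrac{2}{\beta-1}\,\delta_n^{-1}+1$, which is exactly the claimed bound.

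First I would fix two distinct points $x,x'\in V_{\beta,n}$, coming from digit strings $(\epsilon_i)$ and $(\epsilon_i')$, and set $\eta_i=\epsilon_i-\epsilon_i'\in\{-2,-1,0,1,2\}$ and $y=x-x'=\sum_{i=0}^n\eta_i\beta^{n-i}\neq 0$. Writing $P(T)=\sum_{i=0}^n\eta_iT^{n-i}\in\mathbb Z[T]$ we have $y=P(\beta)$, and its conjugates are $y^{(j)}=P(\beta^{(j)})$. Because $\beta$ is an algebraic integer, the integrality lemma (that $P(\beta)P(\beta^{(2)})\cdots P(\beta^{(d)})\in\mathbb Z$ for $P\in\mathbb Z[T]$) shows $\prod_{j=1}^d y^{(j)}$ is an integer; it is nonzero, since $P(\beta^{(j)})=0$ for some $j$ would force the minimal polynomial of $\beta$ to divide $P$, contradicting $y=P(\beta)\neq 0$. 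Hence $\prod_{j=1}^d|y^{(j)}|\ge 1$, and so
\[
|y|=|y^{(1)}|\ \ge\ \Bigl(\prod_{j=2}^d|y^{(j)}|\Bigr)^{-1}.
\]

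The heart of the matter is bounding $\prod_{j=2}^d|y^{(j)}|$ from above, split into the three regimes. For $j\in\{2,\dots,r\}$ the defining constraints of $V_{\beta,n}$ give $|x^{(j)}|,|x'^{(j)}|\le\tfrac{1}{|\beta^{(j)}|-1}$, hence $|y^{(j)}|\le\tfrac{2}{|\beta^{(j)}|-1}$. For a conjugate of modulus $1$ the triangle inequality gives $|y^{(j)}|\le 2\sum_{i=0}^n|\beta^{(j)}|^{n-i}=2(n+1)$, producing the factor $(n+1)^l$. For a conjugate $\beta^{(k)}$ of modulus $<1$, summing the geometric series gives $|y^{(k)}|\le 2\sum_{m\ge0}|\beta^{(k)}|^m=\tfrac{2}{1-|\beta^{(k)}|}$. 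Multiplying these $d-1$ bounds, each carrying a factor $2$, yields $2^{d-1}$ times the stated products and $(n+1)^l$; together with the further factor $2$ from the interval length $\tfrac{2}{\beta-1}=\tfrac{2}{|\beta^{(1)}|-1}$ this assembles into exactly $C(\beta)(n+1)^l$. Thus, listing the points of $V_{\beta,n}$ in increasing order inside $[-\tfrac{1}{\beta-1},\tfrac{1}{\beta-1}]$, consecutive points are separated by at least $\delta_n=\bigl(C(\beta)(n+1)^l/\tfrac{2}{\beta-1}\bigr)^{-1}$, and the packing bound $|V_{\beta,n}|\le \tfrac{2}{\beta-1}\,\delta_n^{-1}+1$ gives the lemma.

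Finally, when $\beta$ is hyperbolic there are no modulus--one conjugates, so $l=0$ and the upper bound on $\prod_{j\ge2}|y^{(j)}|$ no longer depends on $n$; the same estimate then shows that \emph{any} two distinct points of $V_\beta=\bigcup_n V_{\beta,n}$ differ by at least a fixed $\delta>0$. A $\delta$--separated subset of the bounded interval $[-\tfrac{1}{\beta-1},\tfrac{1}{\beta-1}]$ is finite, so $V_\beta$ is finite. The step I expect to require the most care is the middle one: the bookkeeping across the three regimes so that the doubling factors combine to precisely $2^{d}$ and the radii combine to exactly the product appearing in $C(\beta)$, together with the verification that $\prod_{j=1}^d y^{(j)}$ is a genuinely \emph{nonzero} integer, which is where the algebraic-integer hypothesis is used.
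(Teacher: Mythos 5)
Your proposal is correct and follows essentially the same route as the paper: the paper packages the differences $x-x'$ into an auxiliary set $V'_{\beta,n}$ with coefficients in $\{-2,-1,0,1,2\}$, derives the same lower bound $C_0(n)$ on nonzero elements via the integrality of $\prod_{j=1}^d y^{(j)}$ and the same three-regime conjugate bounds, and concludes with the identical packing count $\frac{2}{\beta-1}\frac{1}{C_0(n)}+1 = C(\beta)(n+1)^l+1$. Your explicit justification that the product is a \emph{nonzero} integer (via divisibility by the minimal polynomial) and your uniform-separation argument for finiteness of $V_\beta$ in the hyperbolic case fill in details the paper leaves implicit, but the argument is the same.
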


\begin{proof}
  Let
  $V'_{\beta,n}\subset\left[\dfrac{-2}{\beta-1},\dfrac{2}{\beta-1}\right]$
  be given by
  \begin{multline*}
    V'_{\beta,n}:=\biggl\{x=\sum_{i=0}^n \epsilon_i \beta^{n-i}:
    \epsilon_i\in\{-2,-1,0,1,2\} \text{ and } \\ \left|\sum_{i=0}^n
    \epsilon_i(\beta^{(j)})^{n-i}\right|\leq \frac{2}{|\beta^{(j)}|-1}
    \ \text{ for all } j\in \{1,\ldots r\} \biggr\}.
  \end{multline*}
  For a non-zero $x\in V_{\beta,n}'$, given by
  \[
  x=\sum_{i=0}^n \epsilon_i \beta^{n-i}
  \]
  with $\epsilon_i\in\{-2,-1,0,1,2\}$, write
  \[
  x^{(j)}=\sum_{i=0}^n \epsilon_i (\beta^{(j)})^{n-i}.
  \]
  Then
  \begin{equation}\label{prod}
    \prod_{j=1}^d |x^{(j)}| \geq 1,
  \end{equation}
  since $\prod_{j=1}^d x^{(j)}$ is an integer, which is non-zero as $x
  \neq 0$.

  Now for $j\in\{r+l+1,\ldots,d\}$,
  \[
  |x^{(j)}| = \left|\sum_{i=0}^n \epsilon_i (\beta^{(j)})^{n-i}\right|
  \leq \sum_{i=0}^n 2 |\beta^{(j)}|^{n-i} \leq
  \frac{2}{1-|\beta^{(j)}|}.
  \]
  Furthermore, for $j\in\{2,\ldots,r\}$,
  \[
  |x^{(j)}|\leq \frac{2}{|\beta^{(j)}|-1},
  \]
  since $x\in V_{\beta,n}'$.

  Finally, for $j\in\{r+1,\cdots,r+l\}$,
  \[
  |x^{(j)}| = \left|\sum_{i=0}^n \epsilon_i
  (\beta^{(j)})^{n-i}\right| \leq \sum_{i=0}^n
  2 \cdot 1^{n-i}\leq 2 (n+1).
  \]
  Then by \eqref{prod},
  \[
    |x| \geq \frac{1}{\prod_{i\in\{2,\ldots,d\}}|x^{(i)}|} \geq C_0
    (n),
  \]
  where
  \[
  C_0 (n) := 2^{-(d-1)} \left(\prod_{j\in\{2,\ldots,r\}}
  (|\beta^{(j)}|-1) \right) \frac{1}{(n+1)^l} \left(
  \prod_{j\in\{r+1,\ldots,d\}} (1-|\beta^{(j)}|) \right).
  \]
  Hence, any $x\in V'_{\beta,n}\setminus \{0\}$ has modulus at least
  $C_0(n)$.

  Then for $y,z\in V_{\beta,n}$ with $y\neq z$, we have
  \[
  0 \neq y-z \in V'_{\beta,n}.
  \]
  Hence $|y-z|\geq C_0 (n)$. This shows that any two different
  elements of $V_{\beta,n}$ are separated by at least $C_0
  (n)$. Therefore, since $V_{\beta,n} \subset [-1/(\beta-1),
    1/(\beta-1)]$, this shows that $V_{\beta,n}$ contains at most
  \[
  \frac{2}{\beta - 1} \frac{1}{C_0 (n)} + 1 = C(\beta) (n+1)^l + 1
  \]
  elements.
\end{proof}

\section{Matrices, Lyapunov Exponents, and Lower Bounds for Garsia Entropy}
We now show how the sets $V_{\beta,n}$ of the previous section have a
natural graph structure, which allows one to compute lower bounds for
Garsia entropy.

Start with the sets $V_{\beta,0}=\{1,0,-1\}$ and
$A_{0}=\{1,0,-1\}$. At stage $n\geq 1$ we let
$V_{\beta,n}=V_{\beta,n-1}\bigcup A_n$ where
\[
A_n=\{\beta x -\epsilon_n: \epsilon_n\in\{-1,0,1\}, x\in A_{n-1},
\beta x - \epsilon_n \in V_{\beta}\}.
\]
If $\beta$ is hyperbolic, we stop the algorithm at the stage $n$ for
which $V_{\beta, n} = V_{\beta, n-1}$. Since in the hyperbolic case
$V_{\beta}$ is finite, the algorithm must stop in finite time with
$V_{\beta,n}=V_{\beta}$. If $\beta$ is not hyperbolic then $V_{\beta}$
may be countably infinite, but $V_{\beta,n}$ grows at most
polynomially in $n$.

For each $x,y\in V_{\beta}$, draw a directed edge from $x$ to $y$,
labelled by $\epsilon\in\{-1,0,1\}$, whenever $y=\beta x +
\epsilon$. Call the resulting graph $\mathcal G$.


There is a simple connection between the graph $\mathcal G$ and the
quantities $\mathcal N_n(\underline a)$.

Suppose that for $\underline a=(a_i)_{i=1}^{\infty}$ and $\underline
b=(b_i)_{i=1}^{\infty}$ we have
\[
\sum_{i=1}^n a_i\beta^{-i}=\sum_{i=1}^n b_i\beta^{-i}.
\]
Then, by the definition of $V_{\beta}$ and Lemma \ref{lem-2.3}, for
each $m\in\{1,\ldots,n\}$ we have
\[
d_m(\underline a,\underline b):=\beta^m\sum_{i=1}^m
(a_i-b_i)\beta^{-i}\in V_{\beta}.
\]
Then, letting $d_0(\underline a,\underline b):= 0$, we see that the
word $d_0(\underline a,\underline b)d_1(\underline a,\underline
b)\cdots d_n(\underline a,\underline b)$ follows a path from $0$ to
$0$ on the graph $\mathcal G$, following at each step $i$ an edge
labelled by $(a_i-b_i)\in\{-1,0,1\}$.

Given a word $a_1\cdots a_n\in\{0,1\}^n$ and
$\epsilon_1\cdots\epsilon_n\in\{-1,0,1\}^n$, we write
\[
 \epsilon_1\cdots \epsilon_n \sim a_1\cdots a_n
\]
if $a_i-\epsilon_i\in\{0,1\}$ for each $i\in\{1,\ldots,n\}$. Then
\begin{multline*}
  \mathcal N_n(\underline
  a)=\bigl|\bigl\{\epsilon_1\cdots\epsilon_n\sim a_1\cdots a_n\text{
    such that there is a path}\\ \text{ from $0$ to $0$ in $\mathcal
    G$ obtained by following the edges $\epsilon_1\cdots\epsilon_n$}
  \bigr\}\bigr|.
\end{multline*}

We can write down matrices which encode the choices of move
$\epsilon_i$ available given $a_i$.

Let $x_1,x_2,\ldots$ be some ordering of the elements of $V_{\beta}$,
with $x_1=0$. Let $\mathcal A=\{1,\ldots, |V_{\beta}|\}$ if
$V_{\beta}$ is finite, and $\mathbb N$ otherwise. We want
to write down matrices $M_0$ and $M_1$ such that, for a word
$a_1\cdots a_n$,
\[
(M_{a_1}\cdots M_{a_n})_{i,j} = \sum_{\substack{b_1\cdots b_n \in
    \{0,1\}^n \\ \beta^n x_i +\sum_{l=1}^n (a_l-b_l)\beta^{n-l}=x_j}}
m_p[b_1\cdots b_n].
\]
Let $M_0$ be the $|V_{\beta}|\times |V_{\beta}|$ matrix such that
\[
(M_0)_{i,j}=\left\lbrace \begin{array}{ll} 1-p & \text{if } x_j =
  \beta x_i - 1 \\ p & \text{if } x_j = \beta x_i\\ 0 & \text{
    otherwise} \end{array} \right. ,
\]
and let $M_1$ be the $|V_{\beta}|\times |V_{\beta}|$ matrix such that
\[
(M_1)_{i,j}=\left\lbrace \begin{array}{ll} 1-p & \text{if } x_j =
  \beta x_i \\ p & \text{if } x_j = \beta x_i+1\\ 0 & \text{
    otherwise} \end{array} \right. .
\]

\begin{lemma}\label{MatrixLemma}
  \begin{enumerate}
  \item For $x_i, x_j \in V_{\beta}$,
    \[
    (M_{a_1}\cdots M_{a_n})_{ij} = \sum_{\substack{b_1\cdots b_n \in
        \{0,1\}^n \\ \beta^n x_i + \sum_{l=1}^n
        (a_l-b_l)\beta^{n-l}=x_j }} m_p[b_1\cdots b_n].
    \]
  \item If $\sum_{i=1}^n a_i\beta^{-i}=\sum_{i=1}^n c_i\beta^{-i}$ for
    some $a_1\cdots a_n$, $c_1\cdots c_n\in \{0,1\}^n$, then
    \[
    M_{a_1}\cdots M_{a_n}=M_{c_1}\cdots M_{c_n}.
    \]
  \end{enumerate}
\end{lemma}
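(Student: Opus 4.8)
The plan is to prove part (1) by induction on $n$, after which part (2) follows immediately. The base case $n=1$ is a direct unpacking of the definitions of $M_0$ and $M_1$: for $a_1 = 0$ the nonzero entries of $M_0$ record that from $x_i$ we may move to $\beta x_i$ (choosing $b_1 = 0$, weight $p$) or to $\beta x_i - 1$ (choosing $b_1 = 1$, weight $1-p$), which matches the sum over $b_1 \in \{0,1\}$ with $\beta x_i + (a_1 - b_1) = x_j$; similarly for $a_1 = 1$ with $M_1$. In each case the entry $(M_{a_1})_{i,j}$ equals $m_p[b_1]$ for the unique admissible $b_1$ when such a target $x_j \in V_\beta$ exists, and is $0$ otherwise.

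For the inductive step I would write $M_{a_1}\cdots M_{a_n} = (M_{a_1}\cdots M_{a_{n-1}}) M_{a_n}$ and expand the matrix product as
\[
(M_{a_1}\cdots M_{a_n})_{i,j} = \sum_{k} (M_{a_1}\cdots M_{a_{n-1}})_{i,k}\, (M_{a_n})_{k,j}.
\]
By the induction hypothesis the first factor is the sum of $m_p[b_1\cdots b_{n-1}]$ over words with $\beta^{n-1} x_i + \sum_{l=1}^{n-1}(a_l - b_l)\beta^{n-1-l} = x_k$, and by the base case the second factor contributes $m_p[b_n]$ exactly when $\beta x_k + (a_n - b_n) = x_j$. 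The key bookkeeping step is to check that multiplying the intermediate relation by $\beta$ and adding the last term gives
\[
\beta^{n} x_i + \sum_{l=1}^{n}(a_l - b_l)\beta^{n-l} = \beta x_k + (a_n - b_n) = x_j,
\]
so that summing over the intermediate vertex $x_k$ amounts to summing over all words $b_1\cdots b_n$ satisfying the target identity, with weight $m_p[b_1\cdots b_{n-1}]\,m_p[b_n] = m_p[b_1\cdots b_n]$ by the product structure of the Bernoulli measure. This yields exactly the claimed formula.

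The one point that requires care — and I expect it to be the main (if modest) obstacle — is ensuring that the intermediate values $x_k$ genuinely lie in $V_\beta$, so that no contributing term is silently dropped when the sum is taken over $\mathcal A$ rather than over all of $\mathbb{R}$. This is precisely where Lemma \ref{lem-2.3} is needed: whenever a word $b_1\cdots b_n$ satisfies the full relation ending at $x_j \in V_\beta$, every intermediate partial sum also lies in $V_\beta$, so the relevant vertex $x_k$ is always present in the index set. With part (1) established, part (2) is immediate: if $\sum_{i=1}^n a_i\beta^{-i} = \sum_{i=1}^n c_i\beta^{-i}$, then for any starting vertex $x_i$ the conditions $\beta^n x_i + \sum_l (a_l - b_l)\beta^{n-l} = x_j$ and $\beta^n x_i + \sum_l (c_l - b_l)\beta^{n-l} = x_j$ differ only by the quantity $\sum_l (a_l - c_l)\beta^{n-l} = \beta^n \sum_l (a_l - c_l)\beta^{-l} = 0$, so the two defining sums are identical term by term, giving $(M_{a_1}\cdots M_{a_n})_{i,j} = (M_{c_1}\cdots M_{c_n})_{i,j}$ for all $i,j$.
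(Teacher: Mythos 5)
Your induction is correct and is essentially the paper's own argument: the paper simply states that part (1) follows immediately from the definitions of $M_0$ and $M_1$ (your inductive step is the verification of that claim, with the telescoping identity $\beta(\beta^{n-1}x_i + \sum_{l=1}^{n-1}(a_l-b_l)\beta^{n-1-l}) + (a_n-b_n) = \beta^n x_i + \sum_{l=1}^{n}(a_l-b_l)\beta^{n-l}$ and the product structure of $m_p$ doing the work), and that part (2) follows directly from part (1), exactly as you deduce it. The one point worth tightening is your appeal to Lemma~\ref{lem-2.3}: as stated it only treats sums equal to $0$ (which suffices for the entry $(M_{a_1}\cdots M_{a_n})_{1,1}$ actually used later), so for general endpoints $x_i, x_j \in V_\beta$ you should invoke its \emph{proof} rather than its statement --- the same escape argument shows that if any intermediate value violates the bound $1/(|\beta^{(j)}|-1)$ for some conjugate with $|\beta^{(j)}|>1$, then so does the final value, contradicting $x_j \in V_\beta$, since the conjugate values are well defined on $\mathbb{Z}[\beta]$ and every element of $V_\beta$ satisfies these bounds.
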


\begin{proof}
  Part 1 follows immediately from the definition of $M_0$ and
  $M_1$, and part 2 follows directly from part 1.
\end{proof}
In particular,
\[
\mathcal M_n(\underline a)=\left(M_{a_1}M_{a_2}\cdots
M_{a_n}\right)_{11},
\]
and so we have immediately that
\[
H_n(\beta, p)=-\sum_{a_1\cdots a_n}
m_p[a_1\cdots a_n]\log\left(\left(M_{a_1}M_{a_2}\cdots
  M_{a_n}\right)_{11}\right).
\]
As $H_n(\beta,p)$ is subadditive,  $\frac{1}{n}H_n(\beta,p)$ forms a
sequence which converges to $H(\beta, p)$. We have $H(\beta,p) \leq
\frac{1}{n} H_n (\beta,p)$.

\subsection{Lower Bounds}

For an algebraic integer $\beta$, we define
\begin{align*}
  L_n(\beta,p):&= - \sup_{i\in \mathcal A} \sum_{a_1 \cdots a_n \in
    \{0,1\}^n} m_p [a_1\cdots a_n] \log \Biggl( \sum_{j \in \mathcal A}
  (M_{a_1} \cdots M_{a_n})_{i,j} \Biggr)\\ &= - \sup_{i \in \mathcal A}
  \int_{\underline a \in \{0,1\}^n} \log \Biggl(\sum_{j \in \mathcal A}
  (M_{a_1} \cdots M_{a_n})_{i,j} \Biggr) dm (\underline a).
\end{align*}
Since
\[
M_{11}\leq \sum_{j\in V_{\beta}} M_{1j},
\]
we have, by choosing $i=1$ in the above definition, that $L_n
(\beta,p) \leq H_n (\beta,p)$. Here, and in much of what follows, we
note the minus in the definition of $H_n(\beta,p)$ and $L_n(\beta,p)$
which reverses a lot of inequalities.

\begin{lemma}
  \[
  L_{n+m}(\beta,p)\geq L_n(\beta,p)+L_m(\beta,p).
  \]
\end{lemma}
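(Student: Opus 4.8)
The plan is to establish the superadditivity of $L_n(\beta,p)$ by using the
submultiplicativity of nonnegative matrix products together with a supermultiplicativity
property of the quantity being supremised. The key object to control is the row-sum
$S_i(a_1\cdots a_n) := \sum_{j\in\mathcal A}(M_{a_1}\cdots M_{a_n})_{i,j}$. Writing a
word of length $n+m$ as a concatenation $a_1\cdots a_n\,a_{n+1}\cdots a_{n+m}$, I would
first split the matrix product $M_{a_1}\cdots M_{a_{n+m}} = (M_{a_1}\cdots M_{a_n})(M_{a_{n+1}}\cdots M_{a_{n+m}})$
and expand the total row-sum. Since all matrix entries are nonnegative, one obtains the
identity
\[
S_i(a_1\cdots a_{n+m}) = \sum_{k\in\mathcal A}(M_{a_1}\cdots M_{a_n})_{i,k}\,
S_k(a_{n+1}\cdots a_{n+m}).
\]
The first step is therefore to bound this sum from above by replacing each factor
$S_k(a_{n+1}\cdots a_{n+m})$ by $\sup_{k'\in\mathcal A}S_{k'}(a_{n+1}\cdots a_{n+m})$,
which pulls the second-block supremum out of the sum and leaves
$S_i(a_1\cdots a_n)$ times that supremum.

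The second step is to take logarithms and exploit the product measure structure. Because
$m_p$ is a Bernoulli product measure, $m_p[a_1\cdots a_{n+m}] = m_p[a_1\cdots a_n]\cdot
m_p[a_{n+1}\cdots a_{n+m}]$, so the integral over $\{0,1\}^{n+m}$ factors as a product of
integrals over the two blocks once the integrand splits into a sum of a first-block term
and a second-block term. Using $\log S_i(a_1\cdots a_{n+m}) \le
\log S_i(a_1\cdots a_n) + \log\bigl(\sup_{k}S_k(a_{n+1}\cdots a_{n+m})\bigr)$ from the
first step, I would integrate against $m_p$ and then multiply the inequality by $-1$,
which reverses it. The factorisation of the measure ensures that the integral of the
second-block term against the full $(n+m)$-fold measure collapses to an integral against
the $m$-fold measure alone, and likewise for the first block, so no cross terms survive.

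The third step is to take the supremum over the starting index $i$. Since the first-block
term $\log S_i(a_1\cdots a_n)$ is the only piece depending on $i$, taking $\sup_i$ of the
negated, integrated inequality yields exactly
\[
-\sup_i\int\log S_i(a_1\cdots a_n)\,dm_p \;+\;
-\int\log\Bigl(\sup_k S_k(a_{n+1}\cdots a_{n+m})\Bigr)dm_p
\;\le\; -\sup_i\int\log S_i(a_1\cdots a_{n+m})\,dm_p,
\]
whose two left-hand summands are precisely $L_n(\beta,p)$ and $L_m(\beta,p)$ and whose
right-hand side is $L_{n+m}(\beta,p)$. The main obstacle I anticipate is bookkeeping the
supremum correctly under the sign change: the supremum over $k$ inside the second-block
factor must be identified with the defining supremum in $L_m$, and one must check that
bounding $S_k$ by its own supremum before integrating (rather than after) is what produces
the clean $L_m$ term rather than a weaker quantity. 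A secondary point to verify is that
these manipulations remain valid when $\mathcal A$ is countably infinite, so that the
sums and suprema are over a possibly infinite index set; here the nonnegativity of all
entries guarantees that every rearrangement is legitimate by monotone convergence.
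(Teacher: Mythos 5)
Your decomposition $S_i(a_1\cdots a_{n+m}) = \sum_k (M_{a_1}\cdots M_{a_n})_{i,k}\,S_k(a_{n+1}\cdots a_{n+m})$, the bound by $\sup_k S_k$, and the factorisation of the Bernoulli measure are exactly the steps of the paper's own proof. But the point you defer to a final ``check'' is precisely where the argument, as you state it, does not close. What your chain honestly proves, after negating and taking $\sup_i$, is
\[
L_{n+m}(\beta,p)\;\ge\; L_n(\beta,p)\;-\;\int \log\Bigl(\sup_{k}S_k(b_1\cdots b_m)\Bigr)\,dm_p(\underline b),
\]
and you then assert that the subtracted term ``is precisely $L_m(\beta,p)$''. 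It is not: by definition $L_m = -\sup_k\int\log S_k\,dm_p$, and since the integral of a supremum always dominates the supremum of the integrals, one has $-\int\log\bigl(\sup_k S_k\bigr)\,dm_p \le -\sup_k\int\log S_k\,dm_p = L_m$ --- the wrong direction for concluding $L_{n+m}\ge L_n+L_m$. So bounding $S_k$ by its supremum \emph{before} integrating produces the weaker quantity, not the clean $L_m$ term; carrying out the verification you flag refutes the step rather than confirming it. Nor is the discrepancy an empty formality for these matrices: for the golden ratio with $p=\tfrac12$ and $m=2$, one computes $\sup_k S_k(b_1b_2)=1$ for all four words (the maximising row depends on the word: the row of $0$ for $01$ and $10$, the rows of $\pm(\beta-1)$ for $11$ and $00$), so $\int\log\sup_k S_k\,dm_p=0$, whereas $\sup_k\int\log S_k\,dm_p=\tfrac12\log\tfrac34<0$; the interchange your step needs is strictly false there.

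In fairness, the paper's proof makes the identical silent move: in its displayed chain the supremum over $l$ migrates from inside the $\underline b$-integral (where the pointwise bound places it) to outside, with no justification, so you have faithfully reproduced the published argument including its weakest step. But a blind reconstruction is still accountable for that step: to upgrade the displayed inequality above to superadditivity one needs either an argument that a single index $k$ maximises $S_k(b_1\cdots b_m)$ simultaneously for ($m_p$-almost) all words --- false in general, as the golden-ratio computation shows --- or a genuinely different route. As written, your proposal establishes only the weaker inequality and the identification of the second summand with $L_m$ is a genuine gap, asserted rather than proved.
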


\begin{proof}
  For $i\in \mathcal A$, $\underline a\in\Sigma$ we have
  \begin{align*}
    \sum_{j\in \mathcal A}(M_{a_1}\cdots M_{a_{n+m}})_{i,j}&=
    \sum_{j\in \mathcal A}\sum_{k\in \mathcal A}(M_{a_1}\cdots
    M_{a_n})_{i,k}(M_{a_{n+1}}\cdots
    M_{a_{n+m}})_{k,j}\\ &=\sum_{k\in\mathcal A}(M_{a_1}\cdots
    M_{a_n})_{i,k}\left(\sum_{j\in \mathcal A} (M_{a_{n+1}}\cdots
    M_{a_{n+m}})_{k,j}\right)\\ &\leq \sum_{k\in\mathcal
      A}(M_{a_1}\cdots M_{a_n})_{i,k}\left(\sup_{l\in\mathcal
      A}\sum_{j\in \mathcal A} (M_{a_{n+1}}\cdots
    M_{a_{n+m}})_{l,j}\right)
  \end{align*}
  It follows that
  \begin{align*}
    L_{n+m}(\beta,p) =& - \sup_{i\in \mathcal A}\int_{\underline
      a\in\{0,1\}^{\mathbb N}}\log \Biggl( \sum_{j \in \mathcal A}
    (M_{a_1}\cdots M_{a_{n+m}})_{i,j} \Biggr) dm(\underline a)\\ \geq&
    -\sup_{i\in \mathcal A}\sup_{l\in \mathcal A}\int_{\underline a}
    \int_{\underline b} \Biggl( \log\left( \sum_{k\in \mathcal A}
    (M_{a_1}\cdots M_{a_n})_{i,k} \right)\\ & \hspace{3.2cm} +
    \log\left( \sum_{j\in \mathcal A} (M_{b_{1}}\cdots M_{b_m})_{l,j}
    \right) \Biggr) dm(\underline a)dm(\underline b)\\ =&
    L_n(\beta,p)+L_m(\beta,p). \qedhere
  \end{align*}
\end{proof}

\begin{prop} \label{prop:boundsonH}
  Let $\beta\in(1,2)$ be an algebraic integer. Then the sequence
  $\left(\frac{1}{n}L_n(\beta,p)\right)$ satisfies
  \[
  \frac{1}{n}H_n(\beta,p)-\frac{1}{n}\log(C(\beta)(n+1)^l+1) \leq
  \frac{1}{n}L_n(\beta,p) \leq H (\beta,p)\leq\frac{1}{n}H_n(\beta,p).
  \]
  \end{prop}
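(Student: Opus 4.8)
The statement chains together four inequalities, and my plan is to establish them in order of increasing difficulty, relying on the machinery already assembled. The rightmost inequality $H(\beta,p) \leq \frac{1}{n} H_n(\beta,p)$ and the leftmost logical piece $\frac{1}{n} L_n(\beta,p) \leq L_n$-based bounds have already been noted in the text (subadditivity of $H_n$ gives convergence from above, and choosing $i = 1$ together with $M_{11} \leq \sum_j M_{1j}$ gives $L_n(\beta,p) \leq H_n(\beta,p)$). So the two genuinely new inequalities to prove are the middle one, $\frac{1}{n} L_n(\beta,p) \leq H(\beta,p)$, and the leftmost one, the lower bound on $\frac{1}{n} L_n(\beta,p)$ in terms of $\frac{1}{n} H_n(\beta,p)$ minus a logarithmic error.

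For the convergence inequality $\frac{1}{n} L_n(\beta,p) \leq H(\beta,p)$, the plan is to exploit the superadditivity of the sequence $(L_n(\beta,p))$ established in the preceding lemma. By Fekete's lemma applied to the superadditive sequence, $\frac{1}{n} L_n(\beta,p)$ converges to $\sup_n \frac{1}{n} L_n(\beta,p)$ as $n \to \infty$, and in particular $\frac{1}{n} L_n(\beta,p) \leq \lim_{m\to\infty} \frac{1}{m} L_m(\beta,p)$ for every $n$. It then suffices to identify this limit with $H(\beta,p)$. Since $L_n(\beta,p) \leq H_n(\beta,p)$ for all $n$, the limit of $\frac{1}{n} L_n$ is at most $H(\beta,p)$; the reverse inequality is precisely what the error bound in the leftmost inequality will deliver once we pass to the limit, so these two tasks are intertwined.

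The heart of the matter is the leftmost inequality, and this is where I expect the main obstacle to lie. The idea is to compare, for a fixed starting index $i \in \mathcal A$, the quantity $\sum_{j} (M_{a_1}\cdots M_{a_n})_{i,j}$ with the diagonal entry $(M_{a_1}\cdots M_{a_n})_{11} = \mathcal M_n(\underline a)$. By Part~1 of Lemma~\ref{MatrixLemma}, the row sum $\sum_j (M_{a_1}\cdots M_{a_n})_{i,j}$ counts (with $m_p$-weights) the sequences $b_1\cdots b_n$ whose image $\beta^n x_i + \sum_l (a_l - b_l)\beta^{n-l}$ lands anywhere in $V_\beta$, whereas the diagonal entry fixes the landing point. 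The key observation is that row sums starting from different $x_i$ differ from the $x_1 = 0$ row sum only by the bookkeeping of \emph{which} element of $V_\beta$ is reached, and the number of distinct reachable endpoints is controlled by Lemma~\ref{separation}, namely $|V_{\beta,n}| \leq C(\beta)(n+1)^l + 1$. The plan is to bound the supremum over $i$ of the row-sum $\log$-integral by the $i=1$ version plus an error accounting for at most $|V_{\beta,n}|$ possible endpoints; concretely one rewrites the $\sup_i$ row sum as a sum of $\mathcal M_n$-type diagonal contributions over the reachable targets, and applies the concavity of $\log$ (Jensen) or a crude counting bound so that the multiplicative overcounting by $|V_{\beta,n}|$ contributes an additive $\log(C(\beta)(n+1)^l+1)$ after taking $-\log$ and integrating against $m_p$.

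The delicate point I anticipate is ensuring that the comparison between an arbitrary row $i$ and the reference row $1$ genuinely costs only the polynomial factor $|V_{\beta,n}|$ and not something larger: one must argue that the total $m_p$-mass distributed across all endpoints reachable from $x_i$ equals $1$ (the row sums of a single $M_{a_k}$ are stochastic-like, since each row of $M_0$ and $M_1$ sums to at most $1$ by the $(p,1-p)$ weighting), so that by pigeonhole some endpoint carries mass at least $1/|V_{\beta,n}|$, and then relate this to the quantities $\mathcal M_n$ defining $H_n(\beta,p)$. Pushing the $-\log$ through and integrating, the factor $1/|V_{\beta,n}|$ becomes the additive correction $-\frac{1}{n}\log(C(\beta)(n+1)^l + 1)$, yielding exactly $\frac{1}{n} H_n(\beta,p) - \frac{1}{n}\log(C(\beta)(n+1)^l+1) \leq \frac{1}{n} L_n(\beta,p)$. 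Taking $n \to \infty$ and using that $\frac{1}{n}\log(C(\beta)(n+1)^l+1) \to 0$ then forces $\lim \frac{1}{n} L_n \geq \lim \frac{1}{n} H_n = H(\beta,p)$, closing the loop with the convergence argument above.
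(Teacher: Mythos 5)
Your handling of the three bracketing inequalities matches the paper's proof: superadditivity of $(L_n)$ with Fekete's lemma, the bound $L_n(\beta,p)\le H_n(\beta,p)$ via the choice $i=1$, and subadditivity of $(H_n)$ together give $\frac{1}{n}L_n(\beta,p)\le\lim_m\frac{1}{m}L_m(\beta,p)\le\lim_m\frac{1}{m}H_m(\beta,p)=H(\beta,p)\le\frac{1}{n}H_n(\beta,p)$, and you correctly identify Lemma~\ref{separation} as the source of the error term. The gap is in the central step. What must be proved is, for each fixed $i$,
\[
\sum_{a_1\cdots a_n} m_p[a_1\cdots a_n]\log\Bigl(\sum_{j}(M_{a_1}\cdots M_{a_n})_{i,j}\Bigr)\le \sum_{a_1\cdots a_n} m_p[a_1\cdots a_n]\log\mathcal M_n(\underline a)+\log\bigl(C(\beta)(n+1)^l+1\bigr),
\]
and this holds only \emph{on average}: pointwise in $\underline a$ the inequality $\sum_j(M_{a_1}\cdots M_{a_n})_{i,j}\le K\,\mathcal M_n(\underline a)$ fails in general, because the row sum aggregates the masses of \emph{every} equivalence class landing anywhere in $V_\beta$, while $\mathcal M_n(\underline a)$ is the mass of the single class of $\underline a$ started from $0$; these can differ by an exponential factor for individual words. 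Your pigeonhole device produces a lower bound on the heaviest entry of row $i$, which is the wrong quantity (the heaviest class from $x_i$ bears no relation to the class of $\underline a$) pointing in the wrong direction (you need an upper bound on the whole row sum). The mechanism that actually works, and which your sketch never performs, is a regrouping: by Lemma~\ref{MatrixLemma}(2) the product $M_{a_1}\cdots M_{a_n}=:M_{x,n}$ depends only on $x=\sum_i a_i\beta^{-i}\in X_n$, so the sum over words becomes $\sum_{x\in X_n}(M_{x,n})_{1,1}(\cdots)$ with $\sum_{x\in X_n}(M_{x,n})_{1,1}=1$; one then splits $\log\sum_j(M_{x,n})_{i,j}=\log(M_{x,n})_{1,1}+\log\frac{\sum_j(M_{x,n})_{i,j}}{(M_{x,n})_{1,1}}$ so that $H_n(\beta,p)$ appears exactly, and Jensen bounds the remainder by $\log\sum_{x\in X_n}\sum_j(M_{x,n})_{i,j}$. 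Note this last double sum runs over \emph{one representative per equivalence class}; it is not the (trivially at most $1$) total mass emitted from $x_i$ that your ``stochastic-like'' remark invokes, and its boundedness by $C(\beta)(n+1)^l+1$ is the real content of \eqref{FinalIneq}.

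There is a second, subtler defect in your counting. You bound the number of reachable endpoints by $|V_{\beta,n}|$, but the endpoints $\beta^n x_i+\sum_{l}(a_l-b_l)\beta^{n-l}$ need not lie in $V_{\beta,n}$: if $x_i\in V_{\beta,m}$ they are $\{-1,0,1\}$-polynomials in $\beta$ of degree $m+n$, hence lie in $V_{\beta,m+n}$, whose elements are only separated by $C_0(m+n)$ --- a bound that is not uniform in $i$; and in the non-hyperbolic case, which the proposition covers, $V_\beta$ is infinite, so no cardinality bound on $V_\beta$ is available either. The paper avoids this by counting, for fixed $i$ and $b_1\cdots b_n$, the source classes $x\in X_n(i,b_1\cdots b_n)$: the \emph{differences} of the associated values $\beta^n x-\sum_l b_l\beta^{n-l}$ are $\{-1,0,1\}$-polynomials of degree at most $n-1$ whose conjugates are bounded by $2/(|\beta^{(j)}|-1)$ (because both landing points lie in $V_\beta$), so the argument of Lemma~\ref{separation} gives separation at least $C_0(n)$, uniformly in $i$ and $b$; since all these values sit in the interval $\bigl[-\beta^n x_i-\frac{1}{\beta-1},\,-\beta^n x_i+\frac{1}{\beta-1}\bigr]$ of length $\frac{2}{\beta-1}$, their number is at most $C(\beta)(n+1)^l+1$. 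In the hyperbolic case ($l=0$, $V_\beta$ finite) your endpoint count happens to give the right number, but the proposition is stated for arbitrary algebraic integers, and there the uniform, degree-$n$ separation of differences is essential and missing from your argument.
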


\begin{proof}
  We have proved that $L_n(\beta,p)$ is superadditive. Since $H_n
  (\beta,p)$ is subadditive, $\frac{1}{n}H_n(\beta, p)$ converges to
  $H(\beta,p)$ and
  $\frac{1}{n}H_n(\beta,p)\geq\frac{1}{n}L_n(\beta,p)$, we see that
  \[
  H(\beta,p) \in \Bigl( \frac{1}{n}L_n(\beta,p),
  \frac{1}{n}H_n(\beta,p) \Bigr)
  \]
  for all $n\in\mathbb N$. Hence we need only to prove the left hand
  inequality.

  Let
  \[
  X_n:=\left\lbrace\sum_{i=1}^n
  a_i\beta^{-i}:a_i\in\{0,1\}\right\rbrace.
  \]

  For $x\in X_n$ let $M_{x,n}:=M_{a_1}\cdots M_{a_n}$ for any of the
  words $a_1\cdots a_n$ for which
  \[
  x = \sum_{i=1}^n a_i \beta^{-i}.
  \]
  This is well defined due to Lemma~\ref{MatrixLemma}. Now
  \begin{align*}
    L_n(\beta,p):=& -\sup_{i\in \mathcal A}\sum_{a_1\cdots
      a_n}m_p[a_1\cdots a_n]\log\left(\sum_{j\in \mathcal A}
    (M_{a_1}\cdots M_{a_n})_{i,j}\right)\\ =&-\sup_{i\in \mathcal
      A}\sum_{x\in X_n}(M_{x,n})_{1,1}\log\left(\sum_{j\in \mathcal A}
    (M_{x,n})_{i,j}\right)\\ =& - \sum_{x\in X_n} (M_{x,n})_{1,1} \log
    \bigl( (M_{x,n})_{1,1} \bigr) \\ &- \sup_{i\in \mathcal A}\sum_{x
      \in X_n} (M_{x,n})_{1,1} \log \biggl(\frac{\sum_{j\in \mathcal
        A} (M_{x,n})_{i,j}}{(M_{x,n})_{1,1}} \biggr).
  \end{align*}
  The first term here is $ H_n(\beta,p)$. Since $\sum_{x\in
    X_n}(M_{x,n})_{1,1}=1$, we move this inside the
  $\log$ in the second term, and using the concavity of $\log$ we get
  \[
  L_n(\beta,p) \geq H_n(\beta,p) - \sup_{i\in \mathcal A} \log
  \left(\sum_{x\in X_n}\sum_{j\in \mathcal A} (M_{x,n})_{i,j} \right).
  \]
  Now recall that $(M_{x,n})_{i,j}$ counts, for any $a_1\cdots a_n$
  such that $\sum_{l=1}^n a_l\beta^{-l}=x$, the total measure of the
  words $b_1\cdots b_n$ for which
  \[
  \beta^nx_i + \sum_{l=1}^n (a_l-b_l)\beta^{n-l}=x_j.
  \]

  This can be rewritten as
  \begin{equation}\label{beq}
    \beta^nx_i + \beta^n x-\sum_{l=1}^n b_l\beta^{n-l}=x_j.
  \end{equation}

  In order to sum this over all $x\in X_n$ and $j\in \mathcal A$, we
  count for each $b_1\cdots b_n\in\{0,1\}^n$ the number of $x\in X_n$
  for which an equation of the form (\ref{beq}) is satisfied. This
  gives
  \begin{align*}
    \sum_{x\in X_n}\sum_{j\in \mathcal A} (M_{x,n})_{i,j} &= \sum_{x
      \in X_n} \sum_{j \in \mathcal A} \sum_{\substack{b_1\cdots b_n
        \in \{0,1\}^n \\ \text{\eqref{beq} holds}}} m_p [b_1 \cdots
      b_n] \\ &= \sum_{b_1\cdots b_n \in \{0,1\}^n} m_p [b_1 \cdots
      b_n] \cdot |X_n (i, b_1 \cdots b_n)|,
  \end{align*}
  where
  \[
  X_n (i, b_1 \cdots b_n) = \biggl\{ x \in X_n:\beta^n x_i + \biggl(
  \beta^n x - \sum_{l=1}^n b_l \beta^{n-l} \biggr) \in V_{\beta}
  \biggr\}.
  \]

  But now the separation arguments of Lemmma~\ref{separation} give
  that, for a fixed $i$ and $b_1 \cdots b_n$, sums of the form
  $\beta^n x - \sum_{l=1}^n b_l\beta^{n-l}$ are separated by at least
  $C_0(n)$ unless they are equal. This bounds the number of elements
  of $X_n (i, b_1 \cdots b_n)$. Indeed, all possible
  values of
  \[
  \beta^n x - \sum_{l=1}^n b_l\beta^{n-l}, \qquad x \in X_n (i, b_1
  \cdots b_n),
  \]
  are contained in the interval $\bigl[- \beta^n x_i - \frac{1}{\beta
      - 1}, - \beta^n x_i + \frac{1}{\beta - 1} \bigr]$ and they are
  separated by at least $C_0 (n)$.  Hence $\beta^n x - \sum_{l=1}^n
  b_l\beta^{n-l}$ may attain at most
  \[
  \frac{2}{\beta - 1} \frac{1}{C_0 (n)} + 1
  \]
  different values, that is
  \[
  |X_n (i, b_1 \cdots b_n)| \leq \frac{2}{\beta - 1} \frac{1}{C_0 (n)}
  + 1 = C(\beta) (n+1)^l + 1.
  \]
  Thus
  \begin{align}
    \sum_{x\in X_n}\sum_{j\in \mathcal A} (M_{x,n})_{i,j} & =
    \sum_{b_1\cdots b_n \in \{0,1\}^n} m_p [b_1 \cdots b_n] |X_n (i,
    b_1 \cdots b_n)| \nonumber \\ & \leq (C(\beta)(n+1)^l+1),
    \label{FinalIneq}
  \end{align}
  and so
  \[
  \frac{1}{n}L_n(\beta,p) \geq H
  (\beta,p)-\frac{2}{n}\log(C(\beta)(n+1)^l+1). \qedhere
  \]
\end{proof}

This completes the proof of both Theorem~\ref{thm1} and \ref{thm2}.

\subsection{A matrix form for the hyperbolic case}

We briefly comment on two alternative lower bounds which work for the
hyperbolic case and are much easier to work with. Let
\[
L_n'(\beta,p):=\sum_{a_1\cdots a_n\in\{0,1\}^n}m_p[a_1\cdots
  a_n]\log\left(\lVert (M_{a_1}\cdots M_{a_n}) \rVert \right).
\]
Here, the norm that we use is the row sum norm
\[
\lVert M \rVert = \sup_{i\in\mathcal A}\sum_{j\in\mathcal A}|M_{ij}|.
\]
$L_n'(\beta,p)$ differs from $L_n(\beta,p)$ in that the supremum over
$i\in \mathcal A$ happens inside the summation. Thus
$L_n'(\beta,p)\leq L_n(\beta,p)$.

\begin{lemma}
  When the set $V_{\beta}$ is finite, we have
  \[
  H(\beta,p)=\lim_{n\to\infty} \frac{1}{n}L_n'(\beta,p).
  \]
\end{lemma}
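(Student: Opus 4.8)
The plan is to interpret $\frac{1}{n}L_n'(\beta,p)=-\frac1n\sum_{a_1\cdots a_n} m_p[a_1\cdots a_n]\log\lVert M_{a_1}\cdots M_{a_n}\rVert$ as (minus) the top Lyapunov exponent of the i.i.d.\ cocycle generated by $M_0,M_1$ under $m_p$, and to identify this exponent with $-H(\beta,p)$. Write $P_n=M_{a_1}\cdots M_{a_n}$. Because $V_\beta$ is finite, the matrices are finite-dimensional and the row-sum norm is submultiplicative, $\lVert P_{n+m}\rVert\le\lVert P_n\rVert\,\lVert P_m\rVert$, while the blocks $M_{a_1}\cdots M_{a_n}$ and $M_{a_{n+1}}\cdots M_{a_{n+m}}$ are $m_p$-independent; hence $n\mapsto -\sum_a m_p[a]\log\lVert P_n\rVert$ is superadditive and $\frac1n L_n'$ converges. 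Equivalently, by the Furstenberg--Kesten theorem $\frac1n\log\lVert P_n\rVert$ converges $m_p$-a.e.\ and in $L^1$ to a constant $\lambda$, so $\frac1n L_n'\to-\lambda$. The integrands are uniformly bounded: $(P_n)_{11}\le\lVert P_n\rVert\le 1$ and $(P_n)_{11}\ge m_p[a_1\cdots a_n]\ge(\min\{p,1-p\})^n$, so $\frac1n|\log\lVert P_n\rVert|\le|\log\min\{p,1-p\}|$.

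First I would dispose of the easy inequality. Since $(P_n)_{11}=\mathcal M_n(\underline a)\le\sum_j(P_n)_{1j}\le\lVert P_n\rVert$, we have $\log(P_n)_{11}\le\log\lVert P_n\rVert$, and averaging gives $\frac1n L_n'\le\frac1n H_n$; letting $n\to\infty$ and using $\frac1n H_n\to H$ yields $-\lambda=\lim\frac1n L_n'\le H$. (This is also immediate from $L_n'\le L_n\le nH$ in Theorem~\ref{thm2}.)

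The substance is the reverse inequality, for which I would show $\frac1n\log(P_n)_{11}\to\lambda$ $m_p$-a.e.; together with the uniform bound and dominated convergence this gives $\frac1n H_n=-\frac1n\sum_a m_p[a]\log(P_n)_{11}\to-\lambda$, and as $\frac1n H_n\to H$ we conclude $\lambda=-H$, i.e.\ $\frac1n L_n'\to H$. The upper bound $\limsup\frac1n\log(P_n)_{11}\le\lambda$ is just $(P_n)_{11}\le\lVert P_n\rVert$. For the matching lower bound I would exploit the graph $\mathcal G$: the vertex $0=x_1$ reaches every vertex of $V_\beta$ (this is precisely how the sets $A_n$, and hence $V_\beta$, are built up), so for each $k$ there is a fixed finite word $u^{(k)}$ of a common length $s$ (padding with the positive-weight self-loop $0\to 0$) with $(M_{u^{(k)}})_{1,k}\ge\delta>0$. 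Taking $k$ to index a row of maximal sum of $P_n$ gives $\sum_j(M_{u^{(k)}}P_n)_{1j}=\sum_i(M_{u^{(k)}})_{1i}\,r_i(P_n)\ge\delta\lVert P_n\rVert$, which transfers the growth of $\lVert P_n\rVert$ onto a row sum emanating from $0$; a symmetric ``return to $0$'' step is then needed to turn this row sum into the diagonal entry $(P_{n+2s})_{11}$.

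The hard part will be exactly this return step. One cannot argue by strong connectivity of $\mathcal G$, since the graph genuinely fails to be strongly connected: for instance when $\beta$ is the golden ratio the vertex $\beta$ satisfies $\beta\,\beta-1=\beta$, so it is an absorbing self-loop that never returns to $0$, and along the atypical word $a_1\cdots a_n=0\cdots 0$ one checks that $\lVert P_n\rVert$ exceeds $(P_n)_{11}$ by an exponential factor. Thus no deterministic comparison of $\lVert P_n\rVert$ with $(P_n)_{11}$ can hold; the point is that such sink states lie off the recurrent part of the dynamics and are visited only on a set of words of exponentially small $m_p$-measure, so they do not affect the almost sure growth rate. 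I would therefore carry out the return-to-$0$ argument ergodically: using that for $m_p$-a.e.\ $\underline a$ the dominant mass of the maximal row of $P_n$ is carried by states communicating with $0$, one inserts bounded connecting words infinitely often (via the Borel--Cantelli lemma and the ergodicity of the shift) to bound $\lVert P_n\rVert$ by a uniform constant times $(P_{n+2s})_{11}$ along a sequence of times of full density, yielding $\liminf\frac1n\log(P_n)_{11}\ge\lambda$ a.e. Proving this recurrence rigorously—that the top Lyapunov behaviour is carried by the communicating class of $0$ rather than by the transient or absorbing vertices—is the main obstacle, and is the step in which the finiteness of $V_\beta$ and the structure of $\mathcal G$ must be used in an essential, probabilistic way.
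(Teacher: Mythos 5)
Your first half is sound: the bound $\frac1n L_n'\leq\frac1n H_n$, the superadditivity/Furstenberg--Kesten argument giving $\frac1n L_n'\to-\lambda$, and the observation that $\mathcal G$ is genuinely not strongly connected (the golden-ratio sink $\beta\cdot\beta-1=\beta$ is a correct and instructive example). But the proposal has a genuine gap exactly where you flag it, and the ergodic strategy you sketch is not a reduction to anything simpler. Indeed, since $(P_{n+m})_{11}\geq (P_n)_{11}\,(M_{a_{n+1}}\cdots M_{a_{n+m}})_{11}$, the sequence $-\log(P_n)_{11}$ is a subadditive cocycle over the Bernoulli shift, integrable because $(P_n)_{11}\geq(\min\{p,1-p\})^n$, so Kingman's theorem already gives $\frac1n\log(P_n)_{11}\to-H(\beta,p)$ for $m_p$-a.e.\ $\underline a$. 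Hence your target statement ``$\frac1n\log(P_n)_{11}\to\lambda$ a.e.''\ is literally equivalent to the lemma $H(\beta,p)=-\lambda$: your plan replaces the claim by an equivalent unproved one. The mechanism you propose for it --- Borel--Cantelli insertion of connecting words, plus the assertion that the dominant mass of the maximal row of $P_n$ is a.s.\ carried by states communicating with $0$ --- is precisely the statement that the top Lyapunov exponent is realized on the communicating class of $0$, for which you give no argument; as you yourself note, it fails deterministically, and nothing in the proposal establishes it probabilistically.

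The paper's proof bypasses the pathwise comparison entirely by comparing $H_n$ and $L_n'$ \emph{in average}, and it is elementary. Group words by their value $x=\sum_{i=1}^n a_i\beta^{-i}$; by Lemma~\ref{MatrixLemma} the product $M_{x,n}$ depends only on $x$, and $\sum_{x\in X_n}(M_{x,n})_{11}=1$. Writing $L_n'=H_n-\sum_{x\in X_n}(M_{x,n})_{11}\log\bigl(\lVert M_{x,n}\rVert/(M_{x,n})_{11}\bigr)$ and applying Jensen's inequality gives $L_n'\geq H_n-\log\sum_{x\in X_n}\lVert M_{x,n}\rVert$. Then $\lVert M_{x,n}\rVert\leq\sum_{i,j}(M_{x,n})_{i,j}$, and the Garsia separation count \eqref{FinalIneq} (with $l=0$ in the hyperbolic case) gives $\sum_{x\in X_n}\sum_{j\in\mathcal A}(M_{x,n})_{i,j}\leq C(\beta)+1$ for each fixed $i$, whence $H_n-L_n'\leq\log\bigl(|\mathcal A|(C(\beta)+1)\bigr)$ uniformly in $n$; the finiteness of $V_\beta$ enters only through the factor $|\mathcal A|$. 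Dividing by $n$ and using $\frac1n H_n\to H(\beta,p)$ together with $\frac1n L_n'\leq\frac1n L_n\leq H(\beta,p)$ finishes the proof, with an explicit $O(1/n)$ rate and no Furstenberg--Kesten, connectivity, or recurrence analysis. So the missing recurrence statement in your write-up is exactly what needs proof if one insists on the almost-sure route, but the averaged argument shows it can be avoided altogether.
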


Proving this lemma completes the proof of Theorem~\ref{thm3}.

\begin{proof}
  The proof follows that of Proposition~\ref{prop:boundsonH} exactly
  to give
  \[
  L_n'(\beta,p)\geq H_n(\beta,p)-\log(\sum_{x\in X_n} \lVert M_{x,n}
  \rVert).
  \]

  But
  \begin{align*}
    -\log(\sum_{x\in X_n} \lVert M_{x,n} \rVert)&= -\log(\sum_{x\in
      X_n}\max_{i\in\mathcal A}\sum_{j\in\mathcal A}
    (M_{x,n})_{i,j})\\ &\geq - \log(\sum_{x\in X_n}\sum_{i\in\mathcal
      A}\sum_{j\in\mathcal A} (M_{x,n})_{i,j})\\ &\geq -\log(|\mathcal
    A|(C(\beta)+1),
  \end{align*}
  where the last line uses inequality~\eqref{FinalIneq} (with $l=0$),
  summing both sides over $i\in \mathcal A$. Then
  \[
  \frac{1}{n}L_n'(\beta,p)\geq
  \frac{1}{n}H_n(\beta,p)-\frac{1}{n}\log(|\mathcal A|(C(\beta)+1)),
  \]
  giving that $\frac{1}{n}L_n'(\beta,p)$ converges to $H(\beta,p)$ as
  required.
\end{proof}

Norms of random products of matrices are extremely well studied, and
so putting our lower bound for $H(\beta,p)$ in the above form may
yield useful computations.

We now describe another bound from below on $H(\beta,p)$, which is
computationally very simple, and which is sometimes sufficient to
conclude that the Hausdorff dimension of $\nu_{\beta,p}$ is 1.

\begin{prop} \label{prop:lowerbound}
  Suppose that $V_{\beta}$ is finite. Let $\lambda$ be the largest
  eigenvalue of the matrix $((1-p)M_0 + pM_1)$. Then
  \[
  -\log \lambda \leq H(\beta, p).
  \]
\end{prop}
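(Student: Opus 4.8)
The plan is to read the bound off the matrix representation $\mathcal M_n(\underline a)=(M_{a_1}\cdots M_{a_n})_{11}$, combined with the convexity of $-\log$ and the elementary one-sided spectral-radius asymptotics for nonnegative matrices.

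First I would apply Jensen's inequality to
\[
H_n(\beta,p)=-\sum_{a_1\cdots a_n\in\{0,1\}^n} m_p[a_1\cdots a_n]\,\log\bigl((M_{a_1}\cdots M_{a_n})_{11}\bigr).
\]
Since $-\log$ is convex and $\{m_p[a_1\cdots a_n]\}$ is a probability vector on $\{0,1\}^n$, this yields
\[
\frac1n H_n(\beta,p)\ \ge\ -\frac1n\log\Bigl(\sum_{a_1\cdots a_n} m_p[a_1\cdots a_n]\,(M_{a_1}\cdots M_{a_n})_{11}\Bigr).
\]
Because the coordinates of $\underline a$ are independent under the product measure $m_p$, the expectation inside factorises as a matrix power: writing $N:=\mathbb E_{m_p}[M_{a_1}]=pM_0+(1-p)M_1$ (which coincides with the matrix $(1-p)M_0+pM_1$ of the statement when $p=\tfrac12$), one has $\sum_{a} m_p[a]\,M_{a_1}\cdots M_{a_n}=N^n$, and hence $\frac1n H_n(\beta,p)\ge-\frac1n\log (N^n)_{11}$.

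Next I would pass to the spectral radius $\lambda$ of $N$. The matrix $N$ is nonnegative and finite (as $V_\beta$ is finite), so by Perron–Frobenius $\lambda$ is its largest eigenvalue, and by Gelfand's formula $\limsup_n (N^n)_{11}^{1/n}\le\lim_n\lVert N^n\rVert^{1/n}=\lambda$. Note $(N^n)_{11}>0$ for all $n$ because $x_1=0$ carries a self-loop ($\beta\cdot 0=0$), so every logarithm above is finite. Since $\frac1n H_n(\beta,p)\ge-\frac1n\log (N^n)_{11}$ holds for all $n$ and $\frac1n H_n(\beta,p)\to H(\beta,p)$ (by the subadditivity noted earlier), passing to the limit gives
\[
H(\beta,p)\ \ge\ \liminf_n\Bigl(-\tfrac1n\log (N^n)_{11}\Bigr)=-\limsup_n\tfrac1n\log (N^n)_{11}\ \ge\ -\log\lambda,
\]
which is the assertion.

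The Jensen step and the factorisation of the expectation are routine; the one point demanding care is the spectral-radius estimate. The subtlety is that $N$ need not be irreducible — the state $0$ may sit in a proper subclass of the graph $\mathcal G$ — so one must use only the one-sided bound $(N^n)_{11}\le\lVert N^n\rVert$ together with Gelfand's formula, rather than a two-sided Perron estimate on the $(1,1)$ entry. This one-sided inequality is exactly what the argument needs and holds for every nonnegative matrix, so no primitivity hypothesis on $N$ is required.
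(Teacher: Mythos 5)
Your proof is correct, and it reaches the eigenvalue bound by a genuinely different route than the paper. The paper funnels everything through its lower-bound machinery: it starts from $L_n'(\beta,p)$, replaces the row-sum norm by $\lVert \cdot \rVert_1$, applies Jensen, uses the additivity of $\lVert \cdot \rVert_1$ on nonnegative matrices to collapse the averaged product into $\lVert ((1-p)M_0+pM_1)^n \rVert_1$, and then must invoke Proposition~\ref{prop:boundsonH} (and hence, indirectly, the separation Lemma~\ref{separation}) to know that $\frac{1}{n}L_n'(\beta,p)\leq H(\beta,p)$. You instead apply Jensen directly to $H_n(\beta,p)=-\sum_{a_1\cdots a_n} m_p[a_1\cdots a_n]\log\bigl((M_{a_1}\cdots M_{a_n})_{11}\bigr)$, factor the expectation into $N^n$ by independence, and use only the one-sided bound $(N^n)_{11}\leq\lVert N^n\rVert$ together with Gelfand's formula; since $H(\beta,p)=\lim_n\frac{1}{n}H_n(\beta,p)$ by subadditivity, this bypasses $L_n$, $L_n'$ and Proposition~\ref{prop:boundsonH} entirely, so your argument is self-contained given only Lemma~\ref{MatrixLemma}. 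Your remark that no irreducibility or primitivity of $N$ is required is exactly right --- the argument never needs two-sided Perron asymptotics for the $(1,1)$ entry --- and the positivity points are covered: the self-loop at $0$ keeps $\log (N^n)_{11}$ finite, while each individual term satisfies $(M_{a_1}\cdots M_{a_n})_{11}=\mathcal{M}_n(\underline a)\geq m_p[a_1\cdots a_n]>0$ since $b=a$ always contributes. One further point in your favour: under the paper's convention that $m_p$ gives weight $p$ to the digit $0$, the expectation is indeed $N=pM_0+(1-p)M_1$ as you write, so the coefficients $(1-p)M_0+pM_1$ in the proposition (and in the final line of the paper's own proof) have $p$ and $1-p$ transposed when $p\neq\frac{1}{2}$; your version states the bound correctly, and the two agree in the unbiased case $p=\frac{1}{2}$ used in all of the paper's computations.
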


\begin{proof}
  We use the norm $\lVert M \rVert_1 = \sum_{i,j} |M_{i,j}|$. For
  non-negative matrices $A$ and $B$, we have $\lVert A \rVert_1 + \lVert
  B \rVert_1 = \lVert A + B \rVert_1$.

  We have
  \begin{align*}
    \frac{1}{n} L_n'(\beta,p) &= -\frac{1}{n} \sum_{a_1 \cdots a_n}
     m_p[a_1\cdots a_n]\log \biggl(\lVert M_{a_1} \cdots M_{a_n}
      \rVert \biggr) \\
      &\geq -\frac{1}{n} \sum_{a_1 \cdots a_n}
    m_p[a_1\cdots a_n] \log \biggl( \lVert M_{a_1} \cdots M_{a_n}
      \rVert_1 \biggr) \\
      &\geq -\frac{1}{n} \log \biggl(
    \sum_{a_1 \cdots a_n} m_p[a_1\cdots a_n] \lVert M_{a_1} \cdots
      M_{a_n} \rVert_1 \biggr) \\
      &=- \frac{1}{n} \log
    \biggl( \biggl\lVert \sum_{a_1 \cdots a_n} m_p(a_1\cdots a_n)M_{a_1} \cdots
      M_{a_n} \biggr\rVert_1 \biggr) \\ &=
    -\frac{1}{n} \log \Bigl\lVert \Bigl( (1-p)M_0 + pM_1 \Bigr)^n
    \Bigr\rVert_1.
  \end{align*}
  By Proposition~\ref{prop:boundsonH}, $\frac{1}{n} L_n' (\beta,p)$ is a
  lower bound on $H (\beta,p)$ and since
  \[
  \lim_{n \to \infty} \frac{1}{n} \log \Bigl\lVert \Bigl( (1-p)M_0 +
    pM_1 \Bigr)^n \Bigr\rVert_1 = \log \lambda,
  \]
  we have $H(\beta,p) \geq - \log \lambda$.
\end{proof}

Since computing eigenvalues is extremely rapid, this approach is the
one that we use in practice for proving that $\dim_H(\nu_{\beta,p})=1$
for a variety of examples.

\subsection{Computational ideas and examples}

In this section we describe how to use
Proposition~\ref{prop:boundsonH} to get explicit bounds on
$H(\beta)=H(\beta,1/2)$ and hence on $\dimH \nu_\beta$ for specific
examples. For the remainder of the article we concern ourselves only
with the case of unbiased Bernoulli convolutions, and no longer
include $p$ as a variable.

Suppose $\beta$ is hyperbolic. Then one easily writes a computer
program which finds the (finite) graph $\mathcal{G}$ and the matrices
$M_0$ and $M_1$. By Proposition~\ref{prop:boundsonH}, we have
\[
\frac{1}{n} L_n (\beta) \leq H(\beta) \leq \frac{1}{n} H_n (\beta).
\]
Expressed as a bound on $\dimH \nu_\beta$, it says
\[
 \min \biggl\{ 1, \frac{1}{n} \frac{L_n (\beta)}{\log \beta} \biggr\}
 \leq \dimH \nu_\beta \leq \min \biggl\{1,\frac{1}{n} \frac{H_n
   (\beta)}{\log \beta}\biggr\}.
\]

Given an $n$ one can, with a computer, calculate numerically the above
lower and upper bounds on $H (\beta)$ and $\dimH
\nu_\beta$. Unfortunately, the convergence is quite slow, and the
computational complexity is high, since evaluating $L_n (\beta)$ and
$H_n (\beta)$ involves summing over $2^n$ different sequences.

There is a way to somewhat improve the convergence by pruning the
graph $\mathcal{G}$. Call a vertex $x$ redundant if there is no path
to $\{0\}$ from $x$ along edges in the graph. Clearly, a vertex is
redundant, if and only if all edges from the vertex lead to redundant
vertices. We remove all redundant vertices from $\mathcal{G}$ and get
a new graph which we denote by $\mathcal{G}'$. Using instead this
pruned graph to define $\tilde{L}_n (\beta)$ in the same
way as the definition of $L_n' (\beta)$, the above
bounds on $H(\beta)$ and $\dimH \nu_\beta$ hold with $L_n' (\beta)$
replaced by $\tilde L_n (\beta)$.

\begin{example} \label{ex:bounds}
  To illustrate the above, we let $\beta$ be
  the largest root of the equation
  \[
  \beta^4-\beta^3-\beta^2+\beta-1=0.
  \]
  Here $\beta \approx 1.5129$ has one other conjugate $\beta^{(2)}$
  larger than one in modulus, $\beta^{(2)}\approx -1.1787$. $\beta$
  also has two conjugates less than one in modulus, both of which are
  complex. $\mathcal{G}$ consists of 67 vertices and $\mathcal{G}'$
  consists of 21 vertices.

  Using the graph $\mathcal{G}$ and $n=9$, we find that
  \[
  \frac{1}{n} \frac{L_n' (\beta)}{\log \beta} = 0.77199 \leq \frac{H
    (\beta)}{ \log \beta} \leq 1.5763 = \frac{1}{n} \frac{H_n
    (\beta)}{\log \beta}.
  \]
  Using instead the pruned graph $\mathcal{G}'$ and $n=9$, we find that
  \[
  \frac{1}{n} \frac{\tilde L_n (\beta)}{\log \beta} = 1.0006 \leq
  \frac{H(\beta)}{\log \beta}.
  \]
  We conclude that $\dimH(\nu_{\beta})=1$.  We remark that this result
  does not follow from the aforementioned work of Breuillard and Varju
  \cite{VarjuBreuillard1}, since in this
  example $$\frac{0.44\min\{\log 2, \log M_\beta\}}{\log \beta}\approx
  0.6146<1.$$
\end{example}

As is illustrated in the above example, even if the upper and lower
bounds are far apart, they can still be useful to prove that the
Hausdorff dimension is 1. For the number $\beta$ in the example it is
sufficient to take $n=9$ in order to prove that the dimension is
1. For some other numbers, one needs to take larger values of $n$,
resulting in very long computation times.

\subsubsection{Using Proposition~\ref{prop:lowerbound}}

We now give some examples to show the advantage of using
Proposition~\ref{prop:lowerbound}. The key advantage of this
proposition lies in the fact that eigenvalues are numerically quick to
compute.

\begin{example}
  We take $\beta$ as in
  Example~\ref{ex:bounds}. Proposition~\ref{prop:lowerbound} gives
  \[
  \frac{H(\beta)}{\log \beta} \geq 1.3867
  \]
  and hence $\dimH \nu_\beta = 1$.
\end{example}

The lower bound for $H(\beta,p)$ given in
Proposition~\ref{prop:lowerbound} is not tight. By looking at
Bernoulli convolutions associated with Pisot numbers one can see how
far off the true value it is for some examples.
\begin{example}
  Let $\beta$ be the Golden ratio. Alexander and Zagier showed that
  that $\dimH \nu_\beta = 0.995570\ldots$ \cite{AlexanderZagier}.
  Proposition~\ref{prop:lowerbound} gives
  \[
  \frac{H(\beta)}{\log \beta} \geq 0.9924
  \]
  and hence $\dimH \nu_\beta \geq 0.9924$.
\end{example}

Proposition~\ref{prop:lowerbound} also gives new information for Pisot
numbers. The fact that the dimension of the Bernoulli convolution in
the previous example is known so accurately is due to special
properties of the Golden ratio. Outside of a special class of Pisot
numbers known as multinacci numbers, there are no examples of Pisot
numbers for which the Hausdorff dimension of $\nu_{\beta}$ was known
to three decimal places before the present work, see
\cite{HareSidorov2}. See also \cite{FengRademacher}, in which Feng
calculated the Hausdorff dimension with high precision for multinacci
numbers.

\begin{example}
  Let $\beta$ be the Pisot number given by
  $\beta^3-\beta-1=0$. Since $\beta$ is a Pisot number, we have
  $\dimH \nu_\beta < 1$. Proposition~\ref{prop:lowerbound} gives
  \[
  \frac{H(\beta)}{\log \beta} \geq 0.99999.
  \]
  Hence $0.99999 \leq \dimH \nu_\beta < 1$ and we have obtained the
  Hausdorff dimension of $\nu_{\beta}$ to five decimal places.
\end{example}

Finally, we apply our methods to the study of hyperbolic $\beta$ of
degree $4$ and $5$.
\begin{example} \label{ex:degree4}
Let $\beta$ satisfy
\[
a_4 \beta^4+ a_3\beta^3+\cdots +a_0=0
\]
with each $a_i\in\{-1,0,1\}$. Suppose that $\beta$ is hyperbolic. Then
either $\beta$ is Pisot and $\nu_{\beta}$ has Hausdorff dimension less
than one, or $\beta$ is not Pisot and $\nu_{\beta}$ has dimension
one. The computations are given in Table~\ref{tab:degree2-4}, which
shows all hyperbolic $\beta$ that are roots of a
$\{-1,0,1\}$-polynomial of degree 2, 3 or 4.
\end{example}

\begin{table}
\begin{center}
\tiny
\begin{tabular}{r|c|c|c|r}
Polynomial & $\beta$ & type & $\frac{-\log \lambda}{\log \beta}$ & size of $\mathcal{G}'$  \\
\hline
$x^2 - x - 1$ & 1.6180 & Pisot & 0.99240 & 5 \\
\hline
$x^3 - x^2 - x - 1$ & 1.8393 & Pisot & 0.96422 & 7 \\
$x^3 - x^2     - 1$ & 1.4656 & Pisot & 0.99912 & 49 \\
$x^3       - x - 1$ & 1.3247 & Pisot & 0.99999 & 179\\
\hline
$x^4 - x^3 - x^2 - x - 1$ & 1.9276 & Pisot & 0.97333 & 9 \\
$x^4 - x^3 - x^2 + x - 1$ & 1.5129 & not Pisot & 1.38670 & 21 \\
$x^4 - x^3           - 1$ & 1.3803 & Pisot & 0.99999 & 1253 \\
$x^4 - x^3 + x^2 - x - 1$ & 1.2906 & not Pisot & 2.50349 & 9\\
$x^4       - x^2     - 1$ & 1.2720 & not Pisot & 1.98480 & 25\\
$x^4             - x - 1$ & 1.2207 & not Pisot & 1.61576 & 1693\\
$x^4 + x^3 - x^2 - x - 1$ & 1.1787 & not Pisot & 3.49147 & 21
\end{tabular}
\caption{Lower bounds for all hyperbolic $\beta$ of degree 2, 3 and 4.}
\label{tab:degree2-4}
\end{center}
\end{table}

We also attempted to compute the dimension of all hyperbolic $\beta$
that are roots of a $\{-1,0,1\}$-polynomial of degree 5. As can be
seen in Table~\ref{tab:degree5}, which shows all such numbers, some
$\beta$ give rise to very large graphs for which the computation is
not feasible on a standard computer. An alternative approach to this
case is discussed in the comments section.

\begin{table}
\begin{center}
\tiny
\begin{tabular}{r|c|c|c|r}
Polynomial & $\beta$ & type & $\frac{\log 2 - \log \lambda}{\log \beta}$ & size of $\mathcal{G}'$ \\
\hline
$x^5 - x^4 - x^3 - x^2 - x - 1$ & 1.9659 & Pisot & 0.98357 & 11 \\
$x^5 - x^4 - x^3 - x^2     - 1$ & 1.8885 & Pisot & 0.98227 & 739 \\
$x^5 - x^4 - x^3 - x^2     + 1$ & 1.7785 & Pisot & 0.99576  & 947 \\
$x^5 - x^4 - x^3 - x^2 + x - 1$ & 1.7924 & not Pisot & 1.12741  & 13 \\
$x^5 - x^4 - x^3       - x - 1$ & 1.8124 & Pisot & 0.98243 & 349 \\
$x^5 - x^4 - x^3       - x + 1$ & 1.6804 & not Pisot & 1.17467 & 139 \\
$x^5 - x^4 - x^3           - 1$ & 1.7049 & Pisot & 0.99304 & 339 \\
$x^5 - x^4 - x^3       + x - 1$ & 1.5499 & not Pisot & 1.1971 & 1931 \\
$x^5 - x^4 - x^3 + x^2     - 1$ & 1.4432 & Pisot & 0.99998 & 5387 \\
$x^5 - x^4       - x^2 - x - 1$ & 1.6851 & not Pisot & 1.1072 & 2055 \\
$x^5 - x^4       - x^2 - x + 1$ & 1.5262 & not Pisot & 1.4420 & 139 \\
$x^5 - x^4       - x^2     - 1$ & 1.5702 & Pisot & 0.99986 & 841 \\
$x^5 - x^4       - x^2 + x - 1$ & 1.4036 & not Pisot & 1.3664 & 2041 \\
$x^5 - x^4             - x - 1$ & 1.4971 & not Pisot & 1.4216 & 57 \\
$x^5 - x^4       + x^2 - x - 1$ & 1.2628 & not Pisot & 2.4946 & 131 \\
$x^5 - x^4 + x^3 - x^2 - x - 1$ & 1.4076 & not Pisot & 1.9447 & 11 \\
$x^5 - x^4 + x^3 - x^2     - 1$ & 1.2499 & not Pisot & 1.8291 & 1877 \\
$x^5 - x^4 + x^3       - x - 1$ & 1.2083 & not Pisot & 3.3882 & 11 \\
$x^5       - x^3 - x^2 - x - 1$ & 1.5342 & Pisot & 0.99983 & 2635 \\
$x^5       - x^3 - x^2 - x + 1$ & 1.3690 & not Pisot & 1.8252 & 1119 \\
$x^5       - x^3 - x^2     - 1$ & 1.4291 & not Pisot & 1.6106 & 43 \\
$x^5       - x^3 - x^2 + x - 1$ & 1.2828 & not Pisot & 2.5554 & 13 \\
$x^5       - x^3           - 1$ & 1.2365 & not Pisot & ? & 45563 \\
$x^5       - x^3 + x^2 - x - 1$ & 1.2000 & not Pisot & ? & ? \\
$x^5             - x^2     - 1$ & 1.1939 & not Pisot & ? & 15211 \\
$x^5                   - x - 1$ & 1.1673 & not Pisot & ? & ? \\
$x^5       + x^3 - x^2 - x - 1$ & 1.1436 & not Pisot & 4.3718 & 97 \\
$x^5 + x^4 - x^3 - x^2     - 1$ & 1.1595 & not Pisot & 4.3005 & 13 \\
$x^5 + x^4 - x^3       - x - 1$ & 1.1408 & not Pisot & 4.6298 & 139 \\
$x^5 + x^4       - x^2 - x - 1$ & 1.1237 & not Pisot & ? & 32179 \\
\end{tabular}
\caption{Lower bounds for all hyperbolic $\beta$ of degree 5.}
\label{tab:degree5}
\end{center}
\end{table}

\section{Further Comments and Questions}

\begin{enumerate}
\item As can be seen from Table~\ref{tab:degree5}, when $\beta$ has
  Galois conjugates close to $1$ in modulus the graph $\mathcal G$ can
  be very large. In these cases, calculating the graph $\mathcal G$
  may not be the most efficient way of proving that $\nu_{\beta}$ has
  dimension 1. In a follow up article we show how one can perform
  counting estimates broadly similar to those of
  \cite{HareSidorov1,HareSidorov2} on a higher dimensional self-affine
  set with contraction ratios equal to the Galois conjugates of
  $\beta$. These estimates often yield that $\dimH(\nu_{\beta})=1$,
  and work even in the case of non-hyperbolic $\beta$.
\item A short argument of Mercat (personal communication) shows that
  $H(\beta)\leq \log(\beta)$ whenever $\beta$ is a Salem
  number. Therefore it will hold that $L_n(\beta)<\log(\beta)$ for all
  $n\in\mathbb N$ and so our finite time approximation methods will
  not be able to show that $\dim_H(\nu_{\beta})=1$ for $\beta$ Salem.
\end{enumerate}

\section*{Acknowledgments}
We would like to thank Pierre Arnoux and Paul Mercat for many helpful
conversations.

The majority of this work took place at the Centre International de
Recontres Math\'{e}matiques in Luminy during a Research in Pairs
program. We would like to thank the CIRM and the Jean-Morlet semester
``Tiling and discrete geometry' for financial and logistical
support. D.-J.~Feng, T.~Kempton and T.~Persson are also grateful to
Institut Mittag-Leffler in Djursholm and the semester ``Fractal
geometry and dynamics'' during which parts of this work were
done. D.-J.~Feng was also partially supported by the HKRGC GRF grants
(projects CUHK2130445, CUHK2130546).

\bibliographystyle{plain}
\bibliography{RIP}

\bigskip

S.~Akiyama, Institute of Mathematics, University of Tsukuba, 1-1-1 Tennodai, Tsukuba, Ibaraki, 305-8571, Japan\newline\nopagebreak
\textit{e-mail address:}~\texttt{akiyama@math.tsukuba.ac.jp}

D.-J.~Feng, Department of Mathematics, Room 211, Lady Shaw Building, The Chinese University of Hong Kong, Shatin, N.~T., Hong Kong\newline\nopagebreak
\textit{e-mail address:}~\texttt{djfeng@math.cuhk.edu.hk}

T.~Kempton, School of Mathematics, The University of Manchester, Manchester, M13 9PL,
United Kingdom\newline\nopagebreak
\textit{e-mail address:}~\texttt{thomas.kempton@manchester.ac.uk}

T.~Persson, Centre for Mathematical Sciences, Lund University, Box 118, 221~00 Lund, Sweden\newline\nopagebreak
\textit{e-mail address:}~\texttt{tomasp@maths.lth.se}

\end{document}